\newtheorem{theorem}{Theorem}
\newtheorem{corollary}{Corollary}
\newtheorem{lemma}{Lemma}
\newtheorem{remark}{Remark}
\begin{document}

\numberwithin{equation}{section}
\numberwithin{theorem}{section}

\title{The M/G/1 retrial queue with event-dependent arrivals}
\author{Ioannis Dimitriou}
\affil{Department of Mathematics, University of Patras, 26504 Patras, Greece}
\affil[ ]{\textit {E-mail: idimit@math.upatras.gr}}
\renewcommand\Authands{ and }
\providecommand{\keywords}[1]{\textbf{\textit{Keywords---}} #1}
\maketitle

\begin{abstract}
We introduce a novel single-server queue with general retrial times and event-dependent arrivals. This is a versatile model for the study of service systems, in which the server needs a non-negligible time to retrieve waiting customers upon a service completion, while future arrivals depend on the last realized event. Such a model is motivated by the customers' behaviour in service systems where they decide to join based on the last realized event. We investigate the necessary and sufficient stability condition and derive the stationary distribution both at service completion epochs, and at an arbitrary epoch using the supplementary variable technique. We also study the asymptotic behaviour under high
rate of retrials. Performance measures are explicitly derived and extensive numerical examples are performed to investigate the impact of event-dependency. Moreover, constrained optimisation problems are formulated and solved with ultimate goal to investigate the admission control problem. 
\end{abstract}

\keywords{General retrials; Event-dependent arrivals; Performance; Variable arrival rate.}

\section{Introduction}
In this work, we introduce a novel queueing system with a special feature for the customers’ behaviour, called \textit{event-dependency}, in the presence of retrials. In many service systems, arriving customers choose to get served remotely. Thus, in case they find an idle server they begin their service immediately; otherwise, they register to the system and their demands are queued at a virtual queue, called the orbit queue. When the server becomes idle, he/she turns his/her attention to the orbit and seeks for a customer to begin a new service. This seeking time is usually non-negligible, since the retrieval of a customer from the orbit usually requires some preliminary steps (e.g., to call back
the customer). If a newly arriving customer arrives during the seeking time, the retrieval
process is cancelled, and the server starts serving the newly arriving customer. 

Our aim is to study a versatile model for the representation of such service systems, by introducing an \textit{event-dependent} queueing model with a general retrial/retrieval policy. Our motivation stems also from the fact that often a quick observation of the
system may influence customers' decision about the utility of joining a system or of using a call-back option. In particular, the last
realized event often has an important impact on their decisions. A too high importance
given to the last realized event may, bias customers' decisions by inducing
illusory correlation. For example, if the last event was a service, a customer might thinks that this system has a high speed of service. If the last event was a successful retrial, a customer might thinks that it is not a bad luck to join the orbit queue, i.e., to use the call-back option. If the last event was an arrival of a primary (i.e., a newly arriving customer), the customer might thinks that it is not a good option to join and leave her contact details, i.e., to use the call-back option. 

In modern call centers, the call-back option \cite{call1,call2} allows to essentially decrease the loss probability of calls, to avoid frustration
of customers, to make more smooth load of operators and increase the effectiveness of their
work. With this option, customers who call during the epoch when all operators are busy, does not wait for service in line, but they register in the system and an operator will contact him/her for service later on.
In call centers that do not provide this option some part of the customers' service time is spent on listening customers' complaints about the long waiting time. So, using of the call-back option, we can reduce the average customer's service and
waiting time; see \cite{dud1,phung}. 

In this context, consider a call center, in which \textit{potential} customers are informed about the last realized event. If they receive a busy signal along with the information of the type of the customer in service (see below for more details), they analogously adapt their arrival rates, and either leave their contact details, so they are waiting to get called back latter, or abandon the system. So, the \textit{event-dependency} seems to be an indicator of a strategic behaviour of the customers.

Clearly, the last realized event is not a rational indicator of the quality of a service
system. However, in \cite{gencer}, based on laboratory experiments, the authors showed that the evolution of the queue size due to service completions or arrivals impacts strongly the decision of new arriving customers. 

The main contributions of the paper are summarized as follows.
\begin{itemize}
\item On the modelling side, we introduce the concept of \textit{event-dependent} arrivals in the retrial setting. In particular, we employ a multi-level event dependency, where the arrival rates depend both on the last realized event (i.e., arrival or a departure), and on the type of the last arrival (i.e., whether it is a primary or a retrial customer). We incorporate a behavioural aspect of arriving customers based on the last realized event.
\item On the technical side, we investigate the stationary behaviour at service completion epoch, as well as at an arbitrary epoch, and provide explicit expressions for various performance metrics. The effect of event-dependency on system's performance, is extensively investigated through numerical experiments. Our results indicate that event-dependency is a result of customer's strategic behaviour. Constraint optimization problems are solved and provide insights on how event-dependency affects the admission control problem. Moreover, we provide necessary and sufficient the stability conditions. The asymptotic behaviour of our system under high rate of retrials is also investigated.
\end{itemize}
\subsection{Literature review}
Queueing systems with orbits, formally known as retrial queues enables an accurate representation and quantification of real service systems with finite
waiting spaces. In such systems, customers who find upon arrival a busy server join a so-called retrial orbit and retry to access the server later. For a detailed treatment on retrial queues see the books in \cite{falin,arta}, and references therein; see also \cite{phung}.

The vast majority of works on retrial queueing systems assume the so-called classical retrial policy, under which each retrying customer conducts attempts for service independently of other customers, after exponentially distributed inter-retrial times; see e.g., \cite{dim1}. However, in specific service settings, the
intervals between the successive retrials are independent of the number
of attempting customers. In such cases, it is assumed that only the customer that is at the ``head" of
the orbit is allowed to conduct retrials, e.g., \cite{dim2}, or equivalently, the server searches for customers from the orbit after a service completion (i.e., the concept of call-back option mentioned above). Such a policy is called the constant retrial policy, and was introduced in a fully Markovian framework in \cite{fayo}. In \cite{choi}, the authors considered generally distributed retrial times. In \cite{gomez}, the author  presented an exhaustive analysis of the single-server
retrial queue with the constant retrial policy and general service and seeking times, which includes the
stability condition, the probability generating function (pgf) of the steady-state distribution
and the Laplace–Stieltjes transforms of the waiting time, busy period and idle period
distributions. Since the work in \cite{gomez}, many works have appeared that extend his analysis to systems with additional characteristics. We mention the work in \cite{baron}, where they considered the state-dependent version of the model in \cite{gomez}, using a probabilistic and effective computational method.

However, there is no generalization of the analysis for the \textit{event-dependent} version of the model in \cite{gomez}. A main objective of our work is to fill this gap. Our work differs from \cite{baron}, since the arrival-dependency is based on the last realized event instead of the observed number of orbiting customers. Moreover, the presence of primary and orbiting customers arise a multi-level event-dependent framework.

The performance analysis of queues with event-dependencies was recently introduced in \cite{legros2018}. In \cite{legros2018}, the author studied the standard (no retrials) M/G/1 queue with event-dependent arrivals. In \cite{legros2018q}, the authors studied queueing models where arrivals depend on the remaining service time. Recently, in \cite{legros2021}, the authors investigated the admission control problem with state-dependent arrivals, and provided an algorithm for dimensioning the system. In \cite{legros2022}, the author studied a G/M/1 queue with event-dependent service rate and proved that the last realized event can be efficiently used to lower the wait.

Queues with workload-dependent arrival and/or service rates have been extensively studied in the literature; e.g., \cite{beker,box,kern} (not exhaustive list).  Other
single-server queueing models have been proposed where the arrival or the service
rates depend on the waiting time of the customer in service, or in the queue \cite{box2,dau}. To our best knowledge, the work in \cite{baron} is the only that deals with the analysis of state-dependent M/G/1 retrial queue. We aim to investigate a queue with another feature of the customers' behaviour, namely \textit{multi-level event-dependency} in the retrial setting.  

The rest of the paper is summarized as follows. In Section \ref{model}, we describe the mathematical model in detail. The stability condition and the stationary analysis at service completion epochs is performed in Section \ref{mc}. The stationary analysis at an arbitrary epoch is presented in Section \ref{arb}. Explicit expressions for various performance metrics along with an asymptotic result is also presented. Extensive numerical results that reveal the effect of event-dependency both on the system's performance and on the admission control problem are presented in Section \ref{num}.
\section{Model description}\label{model}
We consider a single-server queueing system with no waiting space. The service times are iid random variables with cumulative distribution function (cdf) $B(\cdot)$, density $b(\cdot)$, Laplace-Stieltjes (LST) $\beta^{*}(.)$ and firsts moments $\bar{b}^{(k)}=(-1)^{k}\beta^{*}(0)$, $k=1,2$. The customers that find the server busy upon arrival, they abandon the system but leave their contact details; hence, we can think that they join an infinite capacity orbit queue, or equivalently, they leave their contact details so they are called back by the server in a later instant. After finishing service, a customer leaves the system and the server declares that a service has completed, and seeks for a customer from the orbit. The seeking/retrieving times are iid random variables with cdf $A(.)$, density $a(.)$ and LST $\alpha^{*}(.)$. However, a new/primary  customer may arrive during the seeking process, and in such a case, the server interrupts the seeking process, and starts serving the newly arriving customer. We assume that the interarrival, service and seeking times are mutually independent. 

We consider a multi-level event-dependent arrivals. Recall that after a service completion, there is a competition between external arrivals and retrials. The type of customer that will occupy the server influences the arrival rates for the next customers. More precisely, based on the last realized event, the next customer arrives according to a Poisson process, as follows (see also Table \ref{t1}):
\begin{itemize}
\item[-] If the last realized event is a service completion, the next primary customer will arrive at a rate $\lambda^{-}$.
\item[-] In case a primary customer has occupied the idle server, then, the first primary customer that arrive during the busy period initiated by that primary customer will arrive at a rate $\lambda^{e}$. Moreover, the subsequent primary customers (i.e., the second, third, etc arriving customers during the busy period initiated by that primary customer) will arrive at a rate $\lambda_{+}^{e}$.
\item[-] In case a retrial customer has occupied the idle server, then, the first primary customer that arrive during the busy period initiated by that retrial customer will arrive at a rate $\lambda^{r}$. Moreover, the subsequent primary customers (i.e., the second, third, etc arriving customers during the busy period initiated by that retrial customer) will arrive at a rate $\lambda_{+}^{r}$.
\end{itemize}
\begin{table}
\caption{Summary of event description and the corresponding arrival rates.}
\label{t1}
\centering
\begin{tabular}{|l|c|}\hline
\textbf{\small{Description of last event}}& \textbf{\small{Next arrival at rate}}\\
\hline \hline
Service completion & $\lambda^{-}$\\\hline
An external arrival has occupied the idle server& $\lambda^{e}$\\\hline
At least one external customer has arrived after&$\lambda_{+}^{e}$\\
the occupation of the idle server by an external customer&\\\hline
A retrial customer has occupied the idle server& $\lambda^{r}$\\\hline
At least one external customer has arrived after&$\lambda_{+}^{r}$\\
the occupation of the idle server by a retrial customer&\\\hline
\end{tabular}
\end{table}
\begin{remark}
From the customer's perspective one might expects $\lambda^{-}>\lambda^{e}\geq \lambda_{+}^{e}$, and $ \lambda^{r}\geq \lambda_{+}^{r}$. This is expected since, if a customer knows that the last realised event is an arrival that has occupied the idle server, she knows that if she decides to join the system, she will be routed to the orbit queue. So she has to wait to be called back by the server in a latter instant (i.e., $\lambda^{-}>\lambda^{e}$). Normally, the subsequent arrivals that already know that other customers have already arrived previously, they might be even more doubted to join the orbit queue (i.e., it is expected that $\lambda^{e}\geq \lambda_{+}^{e}$). Similar arguments may hold for the case $\lambda^{r}\geq \lambda_{+}^{r}$.  
\end{remark}

\section{The embedded Markov chain at service completion epochs}\label{mc}
Let $\tau_{i}$ be the time of the $i$-th departure and $X_{i}=X(\tau_{i}^{+})$ be the number of customers left in orbit just after the departure of the $i-th$ customer. Then, we can write
\begin{equation}
X_{i}=X_{i-1}-B_{i}+A_{i}(B_{i}),
\end{equation}
where $B_i\in\{0,1\}$ is the number of orbiting customers, which enter service at time the $i$th service starts (i.e. $B_i = 1$ if the $i$th cust is an orbiting customer and $B_i = 0$ if the $i$th
cust is an external customer), and
$A_{i}(B_{i})$ is the number of external arrivals during the time the $i$-th served customer stays in the service station ($A_{i}(0)$ (resp. $A_{i}(1)$) is the number of arriving customers during the service of a primary (resp. a retrial) customer). The random variable $B_i$ depends on the history of the system before the time $\tau_{i-1}$ only
through the variable $X_{i-1}$ and its conditional distribution is given by
\begin{displaymath}
\begin{array}{rl}
\mathbb{P}(B_{i}=0|X_{i-1}=n)=&(1-\delta_{0,n})\alpha^{*}(\lambda^{-}),\\
\mathbb{P}(B_{i}=1|X_{i-1}=n)=&1-\delta_{0,n}\alpha^{*}(\lambda^{-}).
\end{array}
\end{displaymath}
The service time of the $i$-th customer is independent of previous service times and the number of orbiting customers. Denote by $S$ the  corresponding service time. We now focus on the distribution of $A_{i}$. Note that since we consider event-dependent arrival rates, we must take into account all the possible events mentioned at the end of the previous section.  More precisely,\\
\textbf{Case 1:} If $X_{i}>0$, the last event is a service completion that leaves the server idle. Thus, the next customer that occupies the server is either an external customer (at a Poisson rate $\lambda^{-}$), or a registered (i.e., a retrial) customer. Therefore, the last event for the first customer who arrives during the service of $(i+1)$-th customer is either an arrival or a successful retrial/retrieval. In case the server was occupied by an external customer, the first customer will arrive at rate $\lambda^{e}$, and all subsequent customers at a rate $\lambda^{e}_{+}$. In case the server was occupied by a retrial customer, the first customer to the busy server arrives at rate $\lambda^{r}$, and all subsequent customers at a rate $\lambda^{r}_{+}$. With such a framework, the next arrival depends both on the last event (i.e., arrival or service completion), and on the type of the customer that have occupied the server in the last (arrival) event. \\
\textbf{Case 2:} If $X_{i}=0$, the last event is a service completion that leaves the system empty. The next customer that occupies the server is an external customer (at a Poisson rate $\lambda^{-}$). Therefore, the last event for the first customer that will arrive after the server's occupation is an arrival, thus will arrive at rate $\lambda^{e}$, and all the subsequent customers will arrive with rate $\lambda_{+}^{e}$.

Thus, due to the event-dependency we need to obtain the distribution of the number of arrivals in a service of length $t$ given the type of the customer that occupied the server. Denote by $N(t)$ the number of arriving customers during a service of length $t$ and let 
\begin{displaymath}
\begin{array}{rl}
\mathbb{P}_{e}(N(t)=n)=&\mathbb{P}(N(t)=n|\text{the server is occupied by a primary customer}),\\
\mathbb{P}_{r}(N(t)=n)=&\mathbb{P}(N(t)=n|\text{the server is occupied by a retrial customer}).
\end{array}
\end{displaymath}
Then, the distribution of $N(t)$, is given by the following set of differential equations:
\begin{equation}
\begin{array}{rl}
\mathbb{P}_{e}(N(0)=0)=&1,\\
\frac{d}{dt}\mathbb{P}_{pr}(N(t)=0)=&-\lambda^{e}\mathbb{P}_{e}(N(t)=0),\\
\frac{d}{dt}\mathbb{P}_{e}(N(t)=1)=&-\lambda_{e}^{+}\mathbb{P}_{e}(N(t)=1)+\lambda^{e}\mathbb{P}_{e}(N(t)=0),\\
\frac{d}{dt}\mathbb{P}_{e}(N(t)=n)=&-\lambda_{e}^{+}\mathbb{P}_{e}(N(t)=n)+\lambda_{e}^{+}\mathbb{P}_{e}(N(t)=n-1),\,n\geq 2.
\end{array}
\label{e1}
\end{equation}
Similarly,
\begin{equation}
\begin{array}{rl}
\mathbb{P}_{r}(N(0)=0)=&1,\\
\frac{d}{dt}\mathbb{P}_{r}(N(t)=0)=&-\lambda^{r}\mathbb{P}_{r}(N(t)=0),\\
\frac{d}{dt}\mathbb{P}_{r}(N(t)=1)=&-\lambda_{r}^{+}\mathbb{P}_{r}(N(t)=1)+\lambda^{r}\mathbb{P}_{r}(N(t)=0),\\
\frac{d}{dt}\mathbb{P}_{r}(N(t)=n)=&-\lambda_{r}^{+}\mathbb{P}_{r}(N(t)=n)+\lambda_{r}^{+}\mathbb{P}_{r}(N(t)=n-1),\,n\geq 2.
\end{array}
\label{e2}
\end{equation}
The solutions of systems \eqref{e1}, \eqref{e2} are respectively
\begin{equation}
\begin{array}{rl}
\mathbb{P}_{e}(N(t)=0)=&e^{-\lambda^{e}}t,\\
\mathbb{P}_{e}(N(t)=n)=&\frac{\lambda^{e}}{\lambda_{e}^{+}-\lambda^{e}}\left(\frac{\lambda_{e}^{+}}{\lambda_{e}^{+}-\lambda^{e}}\right)^{n-1}[e^{-\lambda^{e}}t-e^{-\lambda_{e}^{+}}t\sum_{k=0}^{n-1}\frac{((\lambda_{e}^{+}-\lambda^{e})t)^{k}}{k!}],\,n\geq 1.
\end{array}\label{sol1}
\end{equation}
and
\begin{equation}
\begin{array}{rl}
\mathbb{P}_{r}(N(t)=0)=&e^{-\lambda^{r}}t,\\
\mathbb{P}_{r}(N(t)=n)=&\frac{\lambda^{r}}{\lambda_{r}^{+}-\lambda^{r}}\left(\frac{\lambda_{r}^{+}}{\lambda_{r}^{+}-\lambda^{r}}\right)^{n-1}[e^{-\lambda^{r}}t-e^{-\lambda_{r}^{+}}t\sum_{k=0}^{n-1}\frac{((\lambda_{r}^{+}-\lambda^{r})t)^{k}}{k!}],\,n\geq 1.
\end{array}\label{sol2}
\end{equation}

Note that the arrival processes are modified Poisson processes where the first interarrival time follows a different distribution than the other interarrival times, and at the same time depend also on the type of the customer that occupy the server (i.e., a primary or a retrial customer). Then, for $i,n\geq 0$,
\begin{displaymath}
\begin{array}{c}
\mathbb{P}(A_{i}(0)=n|X_{i}\geq 0)=\int_{0}^{\infty}\mathbb{P}_{e}(N(t)=n)b(t)dt=b_{n}^{e}.
\end{array}
\end{displaymath} 
Similarly, for $i,n\geq 0$
\begin{displaymath}\begin{array}{c}
\mathbb{P}(A_{i}(1)=n|X_{i}>0)=\int_{0}^{\infty}\mathbb{P}_{r}(N(t)=n)b(t)dt=b_{n}^{r}.
\end{array}
\end{displaymath} 
%Finally, when $X_{i}=0$ (i.e., the orbit queue is empty), then, the service initiation of the $(i+1)$-th customer is the arrival time of the corresponding customer. Thus, the last event for all customers that arrive during the service of the $(i+1)$-th customer is his arrival. Therefore,
%\begin{displaymath}
%\mathbb{P}(A_{i}=n|X_{i}=0)=\int_{0}^{\infty}e^{-\lambda^{e}t}\frac{(\lambda^{e}t)^{n}}{n!}
%b(t)dt=\tilde{b}_{n}^{pr}.
%\end{displaymath} 

Let $A_{k}(z)=\sum_{n=0}^{\infty}b_{n}^{k}z^{n}=\sum_{n=0}^{\infty}\int_{0}^{\infty}\mathbb{P}_{k}(N(t)=n)z^{n}b(t)dt$, $|z|\leq 1$, $k=e,\ r$, i.e., $A_{k}(z)$ is the probability generating function of the number of customers that arrive at the system during the service time of a customer of type $k$, $k=e,r$. Then, extensive computations leads to
\begin{displaymath}
\begin{array}{l}
A_{k}(z)=\frac{\beta^{*}(\lambda^{k})(\lambda_{+}^{k}-\lambda^{k})(z-1)-\lambda^{k}z\beta^{*}(\lambda_{+}^{k}(1-z))}{\lambda_{+}^{k}(1-z)-\lambda^{k}},\,k=e,r.
\end{array}
\end{displaymath}
\begin{remark}
Note that if we assume \textit{no event-dependency}, i.e., for $k=e,r,$ $\lambda_{+}^{k}=\lambda^{k}=\lambda$, then, $A_{k}(z)=\beta^{*}(\lambda(1-z))$.
\end{remark}

The one-step transition probabilities $p_{m,n}= \mathbb{P}(X_{i} = n|X_{i-1} = m)$ are given by the formulae:
\begin{displaymath}
\begin{array}{rl}
p_{m,n}=&(1-\alpha^{*}(\lambda^{-})b_{n-m}^{e}+\alpha^{*}(\lambda^{-})b_{n-m+1}^{r},\,m=1,2,\ldots,n,\\
p_{0,n}=&b^{e}_{n},\,n\geq 0,\\
p_{m+1,m}=&\alpha^{*}(\lambda^{-})b_{0}^{r},\,m\geq 0.
\end{array}
\end{displaymath}
\begin{theorem}\label{erg}
 Let $X_{i}$ be the orbit length at the time of the $i$th departure, $i\geq 1$. Then, $\{X_{i},
i\geq 1\}$ is ergodic if and only if 
\begin{equation}
\begin{array}{l}
\bar{b}<\frac{\alpha^{*}(\lambda^{-})[\lambda_{+}^{r}+(\lambda^{r}-\lambda_{+}^{r})\beta^{*}(\lambda^{r})]}{\lambda^{r}[\lambda_{+}^{e}+(\lambda_{+}^{r}-\lambda_{+}^{e})\alpha^{*}(\lambda^{-})]}-\frac{(1-\alpha^{*}(\lambda^{-}))(\lambda^{e}-\lambda_{+}^{e})\beta^{*}(\lambda^{e})}{\lambda^{e}[\lambda_{+}^{e}+(\lambda_{+}^{r}-\lambda_{+}^{e})\alpha^{*}(\lambda^{-})]}
\end{array}\label{ergo}
\end{equation}
\end{theorem}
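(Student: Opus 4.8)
The plan is to identify $\{X_{i}\}$ as an irreducible, aperiodic Markov chain on the nonnegative integers whose one-step increments are spatially homogeneous away from the boundary state $0$, so that its classification reduces to a single mean-drift condition; the inequality \eqref{ergo} will then be precisely the negativity of that drift, rewritten through the first moments of $A_{e}(\cdot)$ and $A_{r}(\cdot)$. First I would record irreducibility and aperiodicity. From the displayed transition probabilities, $p_{m+1,m}=\alpha^{*}(\lambda^{-})b_{0}^{r}=\alpha^{*}(\lambda^{-})\beta^{*}(\lambda^{r})>0$ for every $m\geq 0$, while $p_{0,n}=b_{n}^{e}=\int_{0}^{\infty}\mathbb{P}_{e}(N(t)=n)\,b(t)\,dt>0$ for every $n\geq 0$ (the integrand being strictly positive for $t>0$). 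Hence $0$ communicates with every state, the chain is irreducible on $\{0,1,2,\ldots\}$, and it is aperiodic because $p_{0,0}=b_{0}^{e}=\beta^{*}(\lambda^{e})>0$.

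Next I would exhibit the spatial homogeneity. For $m\geq 1$, conditionally on $X_{i-1}=m$ the increment $\xi:=X_{i}-X_{i-1}$ does not depend on $m$: with probability $1-\alpha^{*}(\lambda^{-})$ a primary customer seizes the server and $\xi=A_{i}(0)\geq 0$ has generating function $A_{e}(z)$, whereas with probability $\alpha^{*}(\lambda^{-})$ a retrial customer does and $\xi=A_{i}(1)-1\geq -1$ has generating function $z^{-1}A_{r}(z)$; hence $\xi$ has generating function $\varphi(z)=(1-\alpha^{*}(\lambda^{-}))A_{e}(z)+\alpha^{*}(\lambda^{-})z^{-1}A_{r}(z)$. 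In particular $\{X_{i}\}$ is a Markov chain of M/G/1 type — an upper-Hessenberg (skip-free-to-the-left) transition matrix with a single non-repeating row, the one from state $0$ — and its mean drift $m_{\xi}:=\varphi'(1)=\mathbb{E}[\xi]=(1-\alpha^{*}(\lambda^{-}))A_{e}'(1)+\alpha^{*}(\lambda^{-})\bigl(A_{r}'(1)-1\bigr)$ is finite since $\bar b<\infty$.

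The classification step is then: such a chain is ergodic if and only if $m_{\xi}<0$. For sufficiency I would apply Foster's criterion with the linear test function $V(m)=m$ and the finite set $\{0\}$: for $m\geq 1$ the one-step mean change of $V$ equals $m_{\xi}$, which is $\leq-\varepsilon<0$ when $m_{\xi}<0$, while at $m=0$ it equals $\sum_{n}n\,p_{0,n}=A_{e}'(1)<\infty$, so the criterion gives positive recurrence. Necessity — which I expect to be the main obstacle — rests on the classical recurrence classification of M/G/1-type (skip-free-to-the-left) Markov chains, or equivalently the drift theorems of the constructive theory of countable Markov chains: since the increments are genuinely homogeneous on $\{1,2,\ldots\}$ with finite mean $m_{\xi}$, the chain is positive recurrent, null recurrent, or transient according as $m_{\xi}<0$, $m_{\xi}=0$, or $m_{\xi}>0$, so $m_{\xi}\geq 0$ rules out ergodicity.

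It then remains only to make $m_{\xi}<0$ explicit. Differentiating the closed form of $A_{k}(z)$ at $z=1$ — a regular point, since both its numerator and denominator equal $-\lambda^{k}\neq 0$ there — or, equivalently, differentiating \eqref{sol1}--\eqref{sol2} integrated against $b(\cdot)$, yields $A_{e}'(1)$ and $A_{r}'(1)$ in closed form in terms of $\bar b$, $\beta^{*}(\lambda^{e})$, $\beta^{*}(\lambda^{r})$ and the arrival rates. Substituting these into $m_{\xi}<0$ and dividing through by the strictly positive quantity $(1-\alpha^{*}(\lambda^{-}))\lambda^{e}_{+}+\alpha^{*}(\lambda^{-})\lambda^{r}_{+}=\lambda^{e}_{+}+(\lambda^{r}_{+}-\lambda^{e}_{+})\alpha^{*}(\lambda^{-})$ rearranges into \eqref{ergo}. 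This last part is purely computational; the substance of the argument lies in the homogeneity observation above together with the two-sided drift criterion.
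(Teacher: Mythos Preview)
Your proposal is correct and follows essentially the same route as the paper: the paper computes the one-step mean drift $x_{n}=(1-\alpha^{*}(\lambda^{-}))A_{e}'(1)+\alpha^{*}(\lambda^{-})(A_{r}'(1)-1)$ for $n\geq 1$, invokes Foster--Lyapunov for sufficiency, and for necessity appeals to Kaplan's condition (equivalently, the generating-function argument via $\pi_{0}>0$), which is exactly the drift-based non-ergodicity criterion you cite for M/G/1-type chains. Your additional checks of irreducibility and aperiodicity are a welcome tightening that the paper leaves implicit.
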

\begin{proof}
See \ref{ap1}.
\end{proof}
Let $\pi_{n}$, $n\geq 0$, be the stationary probability of being in state $n$. Then, the Kolmogorov equations reads: 
\begin{equation}
\begin{array}{rl}
\pi_{n}=&\pi_{0}b_{n}^{e}+(1-\delta_{0n})(1-\alpha^{*}(\lambda^{-}))\sum_{j=1}^{n}\pi_{j}b_{n-j}^{e}\vspace{2mm}\\&+\alpha^{*}(\lambda^{-})\sum_{j=1}^{n+1}\pi_{j}b_{n+1-j}^{r},\,n\geq 0.
\end{array}\label{bal}
\end{equation}
Let $\Pi(z)=\sum_{n=0}^{\infty}\pi_{n}z^{n}$, $|z|\leq 1$. In Theorem \ref{t2}, we obtain $\Pi(z)$, $\pi_{0}$, and give the condition of existence of $\Pi(z)$, which also ensures the stationary regime.
\begin{theorem}\label{t2}
Under the stability condition \eqref{ergo}, we have
\begin{displaymath}
\Pi(z)=\pi_{0}\alpha^{*}(\lambda^{-})\frac{zA_{e}(z)-A_{r}(z)}{\alpha^{*}(\lambda^{-})(zA_{e}(z)-A_{r}(z))+z(1-A_{e}(z))},
\end{displaymath}
where
\begin{displaymath}
\begin{array}{l}
\pi_{0}=\frac{\lambda^{e}\alpha^{*}(\lambda^{-})[\lambda_{+}^{r}(1-\lambda^{r}\bar{b})+\beta^{*}(\lambda^{r})(\lambda^{r}-\lambda_{+}^{r})]-\lambda^{r}(1-\alpha^{*}(\lambda^{-}))[\lambda_{+}^{e}\lambda^{e}\bar{b}+(1-\beta^{*}(\lambda^{e})(\lambda^{e}-\lambda_{+}^{e})]}{\alpha^{*}(\lambda^{-})\left[\lambda^{e}[\lambda_{+}^{r}(1-\lambda^{r}\bar{b})+\beta^{*}(\lambda^{r})(\lambda^{r}-\lambda_{+}^{r})]+\lambda^{r}[\lambda_{+}^{e}\lambda^{e}\bar{b}+(1-\beta^{*}(\lambda^{e})(\lambda^{e}-\lambda_{+}^{e})]\right]}.
\end{array}
\end{displaymath}
Asking $\pi_{0}>0$ we have that \eqref{ergo} is
also necessary for the ergodicity of the chain.
\end{theorem}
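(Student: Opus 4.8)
The plan is to turn the balance equations \eqref{bal} into a single functional equation for $\Pi(z)$, solve it up to the unknown $\pi_{0}$, pin down $\pi_{0}$ by the normalisation $\Pi(1)=1$, and finally read off the necessity of \eqref{ergo} from the sign of $\pi_{0}$.

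First I would multiply \eqref{bal} by $z^{n}$ and sum over $n\geq 0$. The first term contributes $\pi_{0}A_{e}(z)$; the second, a convolution of $(\pi_{j})_{j\geq 1}$ with $(b^{e}_{n})_{n\geq 0}$, contributes $(1-\alpha^{*}(\lambda^{-}))(\Pi(z)-\pi_{0})A_{e}(z)$; the third is the same kind of convolution but with the index shifted by one, which produces a factor $z^{-1}$ and contributes $\tfrac{\alpha^{*}(\lambda^{-})}{z}(\Pi(z)-\pi_{0})A_{r}(z)$. Collecting the terms carrying $\Pi(z)$ and multiplying through by $z$ yields
\[
\Pi(z)\bigl[z\bigl(1-A_{e}(z)\bigr)+\alpha^{*}(\lambda^{-})\bigl(zA_{e}(z)-A_{r}(z)\bigr)\bigr]=\pi_{0}\,\alpha^{*}(\lambda^{-})\bigl(zA_{e}(z)-A_{r}(z)\bigr),
\]
which upon division is exactly the asserted expression for $\Pi(z)$. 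Under the assumed ergodicity $\Pi(z)=\sum_{n\geq 0}\pi_{n}z^{n}$ is analytic on $|z|<1$ and continuous on $|z|\leq 1$, so this power-series identity is legitimate and the stated right-hand side is a bona fide probability generating function (the denominator has a simple zero at $z=1$ that is cancelled by the numerator and does not vanish elsewhere in the closed disc).

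To obtain $\pi_{0}$ I would differentiate the functional equation at $z=1$, or equivalently apply L'Hôpital's rule to $\Pi(z)$, noting that $A_{e}(1)=A_{r}(1)=1$ makes numerator and denominator vanish there. Writing $N(z)=zA_{e}(z)-A_{r}(z)$ and $D(z)=z(1-A_{e}(z))+\alpha^{*}(\lambda^{-})N(z)$, one gets $N'(1)=1+A_{e}'(1)-A_{r}'(1)$, $D'(1)=-A_{e}'(1)+\alpha^{*}(\lambda^{-})N'(1)$, and therefore
\[
\pi_{0}=\frac{D'(1)}{\alpha^{*}(\lambda^{-})N'(1)}=1-\frac{A_{e}'(1)}{\alpha^{*}(\lambda^{-})\bigl(1+A_{e}'(1)-A_{r}'(1)\bigr)}.
\]
A short computation (differentiating the closed form of $A_{k}(z)$, or equivalently integrating the mean of $N(t)$ from \eqref{e1}--\eqref{e2}) gives, for $k=e,r$, $A_{k}'(1)=\lambda_{+}^{k}\bar{b}+\tfrac{(\lambda^{k}-\lambda_{+}^{k})(1-\beta^{*}(\lambda^{k}))}{\lambda^{k}}$; substituting these and simplifying produces the displayed expression for $\pi_{0}$.

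For the last assertion, observe that ergodicity forces $\pi_{0}=\mathbb{P}(X=0)\in(0,1)$; from the formula above $\pi_{0}<1$ is automatic, while $\pi_{0}>0$ holds precisely when the numerator and denominator of $D'(1)/(\alpha^{*}(\lambda^{-})N'(1))$ are simultaneously positive, i.e. precisely when
\[
\alpha^{*}(\lambda^{-})\bigl(1-A_{r}'(1)\bigr)>\bigl(1-\alpha^{*}(\lambda^{-})\bigr)A_{e}'(1),
\]
the positivity of $N'(1)=1+A_{e}'(1)-A_{r}'(1)$ being a consequence of this inequality. Inserting the explicit $A_{e}'(1),A_{r}'(1)$ and clearing denominators turns this into exactly \eqref{ergo}; together with Theorem \ref{erg}, which supplies the reverse implication, this shows \eqref{ergo} is necessary as well as sufficient. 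The step I expect to be most delicate is this final algebraic identification of $\pi_{0}>0$ with \eqref{ergo}: it hinges on computing the moments $A_{e}'(1),A_{r}'(1)$ and tracking every sign correctly, whereas the generating-function reduction and the L'Hôpital computation are routine.
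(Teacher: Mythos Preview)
Your proposal is correct and follows exactly the approach the paper indicates: multiply the balance equations \eqref{bal} by $z^{n}$, sum, solve the resulting functional equation for $\Pi(z)$, and recover $\pi_{0}$ from the normalisation $\Pi(1)=1$ via L'H\^opital. In fact, the paper's own proof is only a two-line sketch (``straightforward by using \eqref{bal} and applying the generating function approach; the normalization condition implies the expression for $\pi_{0}$''), so you have supplied precisely the details the paper omits, including the explicit identification of $A_{k}'(1)$ which the paper only records later in Corollary~\ref{cor1}.
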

\begin{proof}
The proof is straightforward by using \eqref{bal} and applying the generating function approach. The normalization condition implies the expression for $\pi_{0}$.
\end{proof}
\begin{remark}
Note that our model exhibits a behaviour closely related to the stochastic decomposition behaviour, which normally arise in the standard (i.e., no even-dependency) retrial systems. In particular, when $\lambda^{r}=\lambda^{-}$, $\lambda_{+}^{r}=\lambda^{+}$,
\begin{displaymath}
\Pi(z)=\Pi_{M/G/1}^{(event)}(z)\chi_{1}(z)\chi_{2}(z),
\end{displaymath} 
where $\Pi_{M/G/1}^{(event)}(z)$ is the pgf of the number of customers at service completion epochs in the standard M/G/1 queue with event-dependent arrivals \cite{legros2018}, and
\begin{displaymath}
\begin{array}{rl}
\chi_{1}(z)=&\frac{z-A_{r}(z)}{\alpha^{*}(\lambda^{-})(zA_{e}(z)-A_{r}(z))+z(1-A_{e}(z))}\times\frac{\alpha^{*}(\lambda^{-})(1+A_{e}^{(1)}(1)-A_{r}^{(1)})(1))}{1-A_{r}^{(1)}(1)},\\
\chi_{2}(z)=&\frac{zA_{e}(z)-A_{r}(z)}{zB(z)-A_{r}(z)}\times\frac{(1+\lambda^{+}\bar{b}-A_{r}^{(1)})(1))}{1+A_{e}^{(1)}(1)-A_{r}^{(1)}(1)},
\end{array}
\end{displaymath}
where $A_{k}^{(1)}(1)$, $k=e,r,$ are given in Corollary \ref{cor1}. Moreover, $\chi_{1}(z)$ is the pgf of the number of orbiting customers given the system is idle. However, although $\chi_{2}(1)=1$, $\chi_{2}(z)$, is not obvious that constitutes a pgf.
\end{remark}
\section{Performance analysis at arbitrary instants}\label{arb}
Let $X(t)$ be the number of orbiting customers, $C(t)$ the state of the server, and $I(t)$ the last realized event at time $t$, with values as described in Table \ref{tab2}.
\begin{table}
\caption{Description of the states of $I(t)$.}
\label{tab2}
\centering
\begin{tabular}{|l|l|c|}\hline
\textbf{Symbol}&\textbf{\small{Last realized event}}\\
\hline \hline
$E_{1}$&Service completion \\\hline
$E_{2}$& an external arrival occupied the server \\ \hline
$E_{3}$&a retrial customer occupied the server \\ \hline
$E_{4}$& the 1st external customer during\\& the busy period initiated in $E_{2}$, has arrived.\\\hline
$E_{5}$& at least one customer has arrived after $E_{4}$\\\hline
$E_{6}$& the 1st external customer during\\& the busy period initiated in $E_{3}$, has arrived.\\\hline
$E_{7}$& at least one customer has arrived after $E_{6}$\\\hline
\end{tabular}
\end{table}
Let $Z(t)$ the remaining time until the next service (when $C(t)=1$), or seeking completion (when $C(t)=0$) at time $t$. Then $\{(C(t),X(t),I(t),Z(t));t\geq 0\}$ is an irreducible continuous Markov chain with state space $\{(0,0,E_{1})\}\cup\{(0,j,E_{1},r):j\geq 1,r\geq 0\}\cup\{(1,j,E_{k},r):j\geq 0,r\geq 0,k=2,3\}\cup\{(1,j,E_{k},r):j\geq 1,r\geq 0,k=4,6\}\cup\{(1,j,E_{k},r):j\geq 2,r\geq 0,k=5,7\}$.

Let also
\begin{displaymath}
\begin{array}{rl}
p_{0,0}(t)=&P(C(t)=0,X(t)=0,I(t)=E_{1}),\vspace{2mm}\\
p_{0,j}(r,t)=&P(C(t)=0,X(t)=j,I(t)=E_{1},Z(t)\in (r,r+dr]),\,j\geq 1,\vspace{2mm}\\
p^{(k)}_{1,j}(r,t)=&P(C(t)=1,X(t)=j,I(t)=E_{k},Z(t)\in (r,r+dr]),\,j\geq 0,k=2,3,\vspace{2mm}\\
p^{(k)}_{1,j}(r,t)=&P(C(t)=1,X(t)=j,I(t)=E_{k},Z(t)\in (r,r+dr]),\,j\geq 1,k=4,6,\vspace{2mm}\\
p^{(k)}_{1,j}(r,t)=&P(C(t)=1,X(t)=j,I(t)=E_{K},Z(t)\in (r,r+dr]),\,j\geq 2,k=5,7.
\end{array}
\end{displaymath}
We are interesting in the steady-state counterparts (as $t\to\infty$) of these probabilities. 
\begin{lemma}\label{th3}
Let $p_{0,0}=\lim_{t\to\infty}p_{0,0}(t)$, $p_{0,j}(r)=\lim_{t\to\infty}p_{0,j}(r,t)$, and $p_{1,j}^{(k)}(r)=\lim_{t\to\infty}p_{1,j}^{(k)}(r,t)$, $k=2,\ldots,7$. Then:
\begin{equation}
\begin{array}{rll}
\lambda^{-}p_{0,0}=&\sum_{k=2}^{3}p_{1,0}^{(k)}(0),&\\
-\frac{d}{dr}p_{0,j}(r)=&-\lambda^{-}p_{0,j}(r)+a(r)\sum_{k=2}^{7}p_{1,j}^{(k)}(0),&j\geq 1,\\
-\frac{d}{dr}p_{1,j}^{(2)}(r)=&-\lambda^{e}p_{1,j}^{(2)}(r)+\lambda^{-}p_{0,j}b(r),&j\geq 0,\\
-\frac{d}{dr}p_{1,j}^{(3)}(r)=&-\lambda^{r}p_{1,j}^{(3)}(r)+p_{0,j+1}(0)b(r),&j\geq 0,\\
-\frac{d}{dr}p_{1,j}^{(4)}(r)=&-\lambda_{+}^{e}p_{1,j}^{(4)}(r)+\lambda^{e}p_{1,j}^{(2)}(r),&j\geq 1,\\
-\frac{d}{dr}p_{1,j}^{(5)}(r)=&-\lambda_{+}^{e}p_{1,j}^{(5)}(r)+\lambda_{+}^{e}(p_{1,j-1}^{(5)}(r)+p_{1,j-1}^{(4)}(r)),&j\geq 2,\\
-\frac{d}{dr}p_{1,j}^{(6)}(r)=&-\lambda_{+}^{r}p_{1,j}^{(6)}(r)+\lambda^{r}p_{1,j}^{(3)}(r),&j\geq 1,\\
-\frac{d}{dr}p_{1,j}^{(7)}(r)=&-\lambda_{+}^{r}p_{1,j}^{(7)}(r)+\lambda_{+}^{r}(p_{1,j-1}^{(7)}(r)+p_{1,j-1}^{(6)}(r)),&j\geq 2.
\end{array}\label{bnm}
\end{equation}
\end{lemma}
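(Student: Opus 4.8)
The plan is to obtain \eqref{bnm} as the stationary version of the Kolmogorov forward equations for the Markov chain $\{(C(t),X(t),I(t),Z(t))\}$, derived by the supplementary variable technique with $Z(t)$ the remaining seeking time (if $C(t)=0$) or remaining service time (if $C(t)=1$). For each admissible state I would relate its probability at time $t+\Delta t$ to the state probabilities at time $t$: over an interval of length $\Delta t$ the supplementary variable decreases deterministically by $\Delta t$; a primary customer arrives with probability $\lambda^{\bullet}\Delta t+o(\Delta t)$, the rate $\lambda^{\bullet}\in\{\lambda^{-},\lambda^{e},\lambda_{+}^{e},\lambda^{r},\lambda_{+}^{r}\}$ being dictated by the current value of $I(t)$; and, according to the relevant hazard rate, the ongoing seeking or service phase may terminate (that is, $Z$ reaches $0$), which switches $C$, relabels $I$, and may shift $X$ by one. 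Dividing by $\Delta t$ and letting $\Delta t\to 0$ produces the time-dependent system; letting $t\to\infty$ --- legitimate since Theorem~\ref{erg} yields positive recurrence, so all limits exist and $\partial_{t}p(\cdot,t)\to 0$ --- gives \eqref{bnm}.

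Concretely, the deterministic decrease of $Z$ produces the convective term $-\tfrac{d}{dr}$ on each left-hand side; a primary arrival removes mass at the applicable rate, which is the first term on each right-hand side; and a phase completion at $Z=0$ reinjects mass with a freshly sampled duration, which is the source of the density factors $a(r)$ and $b(r)$. A service completion from any busy state $E_{k}$ ($k=2,\dots,7$) with $j$ in orbit thus sends the chain to $(0,j,E_{1})$ with a new seeking time of density $a(r)$, which explains $a(r)\sum_{k=2}^{7}p_{1,j}^{(k)}(0)$; a seeking completion from the idle state with $j+1$ in orbit retrieves an orbiting customer and starts a service of density $b(r)$, giving $p_{0,j+1}(0)\,b(r)$ in the $p_{1,j}^{(3)}$ equation; and a primary arrival during a seeking period interrupts it and starts a service of density $b(r)$, giving $\lambda^{-}p_{0,j}\,b(r)$ in the $p_{1,j}^{(2)}$ equation, with $p_{0,j}$ understood as $p_{0,0}$ when $j=0$ and as $\int_{0}^{\infty}p_{0,j}(r)\,dr$ when $j\ge 1$. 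The empty state $(0,0,E_{1})$ carries no supplementary variable, so it obeys the plain balance equation $\lambda^{-}p_{0,0}=\sum_{k=2}^{3}p_{1,0}^{(k)}(0)$, the sum ranging over the only busy labels ($E_{2}$ and $E_{3}$) compatible with an empty orbit.

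The step requiring genuine care --- and the main obstacle --- is the bookkeeping of the transitions among the seven values of $I$ together with the induced motion of the orbit index. One must check that during a busy period initiated by a primary customer the label evolves $E_{2}\to E_{4}\to E_{5}$, the first arrival during that period occurring at rate $\lambda^{e}$ and all later ones at rate $\lambda_{+}^{e}$, while every such arrival, finding the server busy, enters the orbit; this last point is what forces $p_{1,j}^{(5)}$ to be fed by $p_{1,j-1}^{(5)}$ and $p_{1,j-1}^{(4)}$, and symmetrically $E_{3}\to E_{6}\to E_{7}$ with rates $\lambda^{r}$ and $\lambda_{+}^{r}$ for $p_{1,j}^{(7)}$. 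One must also verify that a successful retrieval lowers the orbit by one and always lands in label $E_{3}$ --- hence the shift from $j+1$ to $j$ in the $p_{1,j}^{(3)}$ equation --- and that the index restrictions ($j\ge 0$ for $E_{1},E_{2},E_{3}$, $j\ge 1$ for $E_{4},E_{6}$, $j\ge 2$ for $E_{5},E_{7}$) are precisely the orbit sizes at which each state can occur. Collecting, for every state, all the outflow and inflow contributions identified above and letting $t\to\infty$ yields exactly \eqref{bnm}.
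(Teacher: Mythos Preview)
Your proposal is correct and follows essentially the same approach as the paper: both derive the time-dependent Kolmogorov forward equations for $\{(C(t),X(t),I(t),Z(t))\}$ by considering the evolution over $[t,t+\Delta t]$ and then let $t\to\infty$ to obtain the stationary system \eqref{bnm}. Your write-up is in fact considerably more explicit than the paper's, which merely records the time-dependent system and states that the limit yields \eqref{bnm}; your careful accounting of the $I$-label transitions $E_{2}\to E_{4}\to E_{5}$ and $E_{3}\to E_{6}\to E_{7}$, and of why each arrival during a busy period increments the orbit index, is exactly the content the paper leaves implicit.
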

\begin{proof}
See \ref{ap2}.
\end{proof}
Let for $Re(s)\geq 0$, $|x|\leq 1$,
\begin{displaymath}
\begin{array}{rl}
P_{0}^{*}(s,z)=&\sum_{j=1}^{\infty}\int_{0}^{\infty}e^{-sr}p_{0,j}(r)drz^{j},\\
P_{1,k}^{*}(s,z)=&\sum_{j=0}^{\infty}\int_{0}^{\infty}e^{-sr}p_{1,j}^{(k)}(r)drz^{j},\,k=2,3,\\
P_{1,k}^{*}(s,z)=&\sum_{j=1}^{\infty}\int_{0}^{\infty}e^{-sr}p_{1,j}^{(k)}(r)drz^{j},\,k=4,6,\\
P_{1,k}^{*}(s,z)=&\sum_{j=2}^{\infty}\int_{0}^{\infty}e^{-sr}p_{1,j}^{(k)}(r)drz^{j},\,k=5,7.
\end{array}
\end{displaymath}

\begin{theorem}\label{basic}
The stationary distribution of $(C,X,I)$ has the following pgfs
\begin{equation}
\begin{array}{rl}
P_{0}^{*}(0,z)=&\frac{p_{0,0}z(1-\alpha^{*}(\lambda^{-}))(A_{e}(z)-1)}{\alpha^{*}(\lambda^{-})(zA_{e}(z)-A_{r}^{*}(z))+z(1-A_{e}(z))},\vspace{2mm}\\
P_{1,2}^{*}(0,z)=&\lambda^{-}\frac{1-\beta^{*}(\lambda^{e})}{\lambda^{e}}(p_{0,0}+P_{0}^{*}(0,z)),\vspace{2mm}\\
P_{1,3}^{*}(0,z)=&\frac{1-\beta^{*}(\lambda^{r})}{\lambda^{r}}K(z),\vspace{2mm}\\
\sum_{k=4}^{5}P_{1,k}^{*}(0,z)=&\frac{\lambda^{-}z[1-\beta^{*}(\lambda^{e})-\frac{\lambda^{e}(1-\beta^{*}(\lambda_{+}^{e}(1-z)))}{\lambda_{+}^{e}(1-z)}]}{\lambda_{+}^{e}(1-z)-\lambda^{e}}(p_{0,0}+P_{0}^{*}(0,z)),\vspace{2mm}\\
\sum_{k=6}^{7}P_{1,k}^{*}(0,z)=&\frac{z[1-\beta^{*}(\lambda^{r})-\frac{\lambda^{r}(1-\beta^{*}(\lambda_{+}^{r}(1-z)))}{\lambda_{+}^{r}(1-z)}]}{\lambda_{+}^{r}(1-z)-\lambda^{r}}K(z),
\end{array}\label{t1s1}
\end{equation}
where
\begin{displaymath}
\begin{array}{rl}
K(z)=&\frac{\lambda^{-}\alpha^{*}(\lambda^{-})[p_{0,0}(A_{e}(z)-1)+A_{e}(z)P_{0}^{*}(0,x)]}{z-\alpha^{*}(\lambda^{-})A_{r}(z)},\vspace{2mm}\\
p_{0,0}=&\frac{\alpha^{*}(\lambda^{-})(\lambda^{e}t_{r}+\lambda^{r}t_{e})-\lambda^{r}t_{e}}{\alpha^{*}(\lambda^{-})[(1+\lambda^{-}\bar{b})\lambda^{e}t_{r}+\lambda^{-}\bar{b}\lambda^{r}t_{e}]},%
%p_{0,0}=\frac{\lambda^{e}\alpha^{*}(\lambda^{-})[\lambda_{+}^{r}(1-\lambda^{r}\bar{b})+\beta^{*}(\lambda^{r})(\lambda^{r}-\lambda_{+}^{r})]-\lambda^{r}(1-\alpha^{*}(\lambda^{-}))[\lambda_{+}^{e}\lambda^{e}\bar{b}+(1-\beta^{*}(\lambda^{e})(\lambda^{e}-\lambda_{+}^{e})]}{\alpha^{*}(\lambda^{-})\left[(1+\lambda^{-}\bar{b})\lambda^{e}[\lambda_{+}^{r}(1-\lambda^{r}\bar{b})+\beta^{*}(\lambda^{r})(\lambda^{r}-\lambda_{+}^{r})]+\lambda^{-}\bar{b}\lambda^{r}[\lambda_{+}^{e}\lambda^{e}\bar{b}+(1-\beta^{*}(\lambda^{e})(\lambda^{e}-\lambda_{+}^{e})]\right]}.
\end{array}
\end{displaymath}where 
\begin{displaymath}
\begin{array}{rl}
t_{e}=&(\lambda^{e}-\lambda_{+}^{e})(1-\beta^{*}(\lambda^{e}))+\lambda^{e}\lambda_{+}^{e}\bar{b},\\
t_{r}=&(\lambda^{r}-\lambda_{+}^{r})\beta^{*}(\lambda^{r})+\lambda_{+}^{r}(1-\lambda^{r}\bar{b}).
\end{array}
\end{displaymath}
\end{theorem}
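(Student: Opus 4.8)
The plan is to convert the integro-differential balance relations of Lemma~\ref{th3} into algebraic equations for the transforms $P_{0}^{*}(s,z)$ and $P_{1,k}^{*}(s,z)$, $k=2,\dots,7$, and to solve the resulting system in a convenient order. Multiplying the $j$-th line of \eqref{bnm} by $e^{-sr}z^{j}$, integrating over $r\in(0,\infty)$, summing over the admissible $j$, and using $\int_{0}^{\infty}e^{-sr}(-\tfrac{d}{dr}f(r))\,dr=f(0)-sf^{*}(s)$ together with $\alpha^{*}(s)=\int_{0}^{\infty}e^{-sr}a(r)\,dr$, $\beta^{*}(s)=\int_{0}^{\infty}e^{-sr}b(r)\,dr$, one obtains a linear system whose unknowns are those six transforms plus the boundary generating functions $G_{0}(z):=\sum_{j\ge1}p_{0,j}(0)z^{j}$ and $\Phi_{k}(z):=\sum_{j}p_{1,j}^{(k)}(0)z^{j}$ and the scalar $p_{0,0}$; the first line of \eqref{bnm} supplies the side relation $p_{1,0}^{(2)}(0)+p_{1,0}^{(3)}(0)=\lambda^{-}p_{0,0}$.

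The system unravels triangularly. The $E_{2}$-equation writes $P_{1,2}^{*}(s,z)$ as an affine function of $p_{0,0}+P_{0}^{*}(0,z)$ and of $\Phi_{2}(z)$; putting $s=\lambda^{e}$ (the coefficient of $P_{1,2}^{*}$ then vanishes and the transform is analytic there) fixes $\Phi_{2}(z)=\lambda^{-}\beta^{*}(\lambda^{e})(p_{0,0}+P_{0}^{*}(0,z))$, hence $P_{1,2}^{*}(0,z)$. The $E_{3}$-equation is identical in spirit, with the term $p_{0,j+1}(0)$ producing a factor $G_{0}(z)/z$ and $s=\lambda^{r}$ delivering $\Phi_{3}(z)$ --- this is the natural place to christen $K(z)$ as (a multiple of) $G_{0}(z)/z$. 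Adding the $E_{4}$- and $E_{5}$-equations collapses them to one equation for $P_{1,4}^{*}(s,z)+P_{1,5}^{*}(s,z)$ forced by $P_{1,2}^{*}$, and the substitution $s=\lambda_{+}^{e}(1-z)$, which lies in $\{\mathrm{Re}\,s\ge0\}$ for $|z|\le1$ and is exactly what makes this elimination legitimate, yields $\Phi_{4}(z)+\Phi_{5}(z)$ and then $\sum_{k=4}^{5}P_{1,k}^{*}(0,z)$. The $E_{6}$/$E_{7}$-pair is handled the same way with $(\lambda^{r},\lambda_{+}^{r})$ and forced by $P_{1,3}^{*}$, giving $\sum_{k=6}^{7}P_{1,k}^{*}(0,z)$ in terms of $K(z)$.

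Closure comes from the idle-server equation, whose transform is $G_{0}(z)-sP_{0}^{*}(s,z)=-\lambda^{-}P_{0}^{*}(s,z)+\alpha^{*}(s)B(z)$ with $B(z)=\sum_{j\ge1}\sum_{k=2}^{7}p_{1,j}^{(k)}(0)z^{j}$ independent of $s$. Evaluating at $s=0$ and at $s=\lambda^{-}$ and combining the two relations eliminates $B(z)$ and gives $G_{0}(z)=\tfrac{\lambda^{-}\alpha^{*}(\lambda^{-})}{1-\alpha^{*}(\lambda^{-})}P_{0}^{*}(0,z)$, so $K(z)$ is proportional to $P_{0}^{*}(0,z)$. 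Substituting the already-found $\Phi_{k}$'s (all now expressed through $p_{0,0}$, $P_{0}^{*}(0,z)$, $G_{0}(z)$) into $B(z)=G_{0}(z)+\lambda^{-}P_{0}^{*}(0,z)$, and using the side relation above, leaves a single linear equation for $P_{0}^{*}(0,z)$; solving it and observing that the combinations of $\beta^{*}(\lambda^{e}),\beta^{*}(\lambda^{r}),\beta^{*}(\lambda_{+}^{e}(1-z)),\beta^{*}(\lambda_{+}^{r}(1-z))$ that appear reassemble into the service-completion pgfs $A_{e}(z),A_{r}(z)$ of Section~\ref{mc} yields the first displayed formula. Back-substitution then turns $K(z)$, $P_{1,3}^{*}(0,z)$ and $\sum_{k=6}^{7}P_{1,k}^{*}(0,z)$ into the stated closed forms.

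It remains to pin down $p_{0,0}$, for which the normalisation $p_{0,0}+P_{0}^{*}(0,1)+\sum_{k=2}^{7}P_{1,k}^{*}(0,1)=1$ is used. Since $A_{e}(1)=A_{r}(1)=1$, each pgf is of the indeterminate form $0/0$ at $z=1$, so one needs $A_{e}^{(1)}(1),A_{r}^{(1)}(1)$ (Corollary~\ref{cor1}), which bring in the mean service time $\bar b$; collecting terms produces the quantities $t_{e},t_{r}$ of the statement and the announced expression for $p_{0,0}$. I expect the real work to be concentrated here and in the closing step of the previous paragraph --- the simplification that exhibits $A_{e},A_{r}$, the index bookkeeping when the $E_{4}/E_{5}$ and $E_{6}/E_{7}$ pairs are merged, and the L'Hôpital limits at $z=1$ --- rather than in any conceptual difficulty; everything else is a disciplined but routine run of the supplementary-variable/transform machinery. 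A remark is in order on the substitutions $s\in\{\lambda^{e},\lambda^{r},\lambda_{+}^{e}(1-z),\lambda_{+}^{r}(1-z),\lambda^{-}\}$: each lies in the half-plane where the transforms are defined and analytic, and under \eqref{ergo} the denominator $\alpha^{*}(\lambda^{-})(zA_{e}(z)-A_{r}(z))+z(1-A_{e}(z))$ does not vanish for $|z|<1$, so the resulting expressions are genuine analytic functions on the unit disc.
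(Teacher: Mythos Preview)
Your proposal is correct and follows essentially the same supplementary-variable/transform route as the paper: Laplace-transform in $r$, $z$-transform in $j$, then determine the boundary pgfs by the substitutions $s\in\{\lambda^{e},\lambda^{r},\lambda_{+}^{e}(1-z),\lambda_{+}^{r}(1-z),\lambda^{-}\}$, close via the idle-server relation, and normalise at $z=1$. The only organisational difference is that you first eliminate $B(z)$ from the $s=0$ and $s=\lambda^{-}$ evaluations to obtain $G_{0}(z)=\tfrac{\lambda^{-}\alpha^{*}(\lambda^{-})}{1-\alpha^{*}(\lambda^{-})}P_{0}^{*}(0,z)$ and then substitute the $\Phi_{k}$'s, whereas the paper keeps $P_{0}(0,z)$ and $P_{0}^{*}(0,z)$ as separate unknowns, equates the two expressions for $\sum_{k}\Phi_{k}(z)$ to get $P_{0}(0,z)$ in terms of $P_{0}^{*}(0,z)$, and only then sets $s=0$; the two orderings are algebraically equivalent. (Minor remark: $K(z)$ is exactly $G_{0}(z)/z$, not merely a multiple of it.)
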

\begin{proof}
See \ref{ap3}.
\end{proof}
Note that asking $p_{0,0}>0$, we obtain the necessary stability condition, which is the same as the one in \eqref{ergo}. 
\subsection{The event-independent case}
We now consider the event-independent case, where $\lambda^{k}=\lambda_{+}^{k}=\lambda$, $k=e,r$. Then, using the previous results, our model reduces to the one in \cite{gomez}, where the event-independent case was treated. Indeed, it easy to see that when $\lambda^{k}=\lambda_{+}^{k}=\lambda$, $k=e,r$, then $A_{k}(z)=\beta^{*}(\lambda-\lambda z)$, and $\pi_{0}=p_{0,0}=1-\frac{\lambda\bar{b}}{\alpha^{*}(\lambda)}$ with $\lambda\bar{b}<\alpha^{*}(\lambda)$ being the stability condition; see Theorems 1, 2 in \cite{gomez}.
\subsection{Performance metrics}
Having obtained explicitly the pgfs, we can have in closed for the basic performance metrics.
\begin{corollary}\label{c1}
The probabilities of server's state are:
\begin{equation*}
\begin{array}{rl}
P(C=0)=&p_{0,0}+P_{0}^{*}(0,1)=\frac{\lambda^{e}t_{r}}{(1+\lambda^{-}\bar{b})\lambda^{e}t_{r}+\lambda^{-}\bar{b}\lambda^{r}t_{e}},\\
P(C=1,I=E_{2})=&P_{1,2}^{*}(0,1)=\frac{\lambda^{-}t_{r}(1-\beta^{*}(\lambda^{e}))}{(1+\lambda^{-}\bar{b})\lambda^{e}t_{r}+\lambda^{-}\bar{b}\lambda^{r}t_{e}},\\
P(C=1,I=E_{3})=&P_{1,3}^{*}(0,1)=\frac{\lambda^{-}t_{e}(1-\beta^{*}(\lambda^{r}))}{(1+\lambda^{-}\bar{b})\lambda^{e}t_{r}+\lambda^{-}\bar{b}\lambda^{e}t_{r}},\\
P(C=1,I=E_{4})+P(C=1,I=E_{5})=&P_{1,4}^{*}(0,1)+P_{1,5}^{*}(0,1)\\=&\frac{\lambda^{-}\lambda^{r}t_{r}}{(1+\lambda^{-}\bar{b})\lambda^{e}t_{r}+\lambda^{-}\bar{b}\lambda^{r}t_{e}}(\bar{b}-\frac{1-\beta^{*}(\lambda^{e})}{\lambda^{e}}),\\
P(C=1,I=E_{6})+P(C=1,I=E_{7})=&P_{1,6}^{*}(0,1)+P_{1,7}^{*}(0,1)\\=&\frac{\lambda^{-}\lambda^{r}t_{e}}{(1+\lambda^{-}\bar{b})\lambda^{e}t_{r}+\lambda^{-}\bar{b}\lambda^{r}t_{e}}(\bar{b}-\frac{1-\beta^{*}(\lambda^{r})}{\lambda^{r}}).
\end{array}\label{t11}
\end{equation*}

\end{corollary}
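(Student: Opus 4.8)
The plan is to obtain each server-state probability as the value at $z=1$ of the corresponding partial generating function from Theorem~\ref{basic}. Each of $p_{0,0}+P_0^*(0,z)$, $P_{1,2}^*(0,z)$, $P_{1,3}^*(0,z)$, $P_{1,4}^*(0,z)+P_{1,5}^*(0,z)$ and $P_{1,6}^*(0,z)+P_{1,7}^*(0,z)$ is, for $|z|\le 1$, a convergent series of stationary probabilities summed over orbit lengths (and, for the $C=0$ and $C=1$ states, integrated over the residual seeking/service time), so it has nonnegative coefficients and is continuous at $z=1$ by Abel's theorem; its value there equals $P(C=0)$, $P(C=1,I=E_k)$, and the indicated pairwise sums. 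Thus the proof reduces to: set $z=1$, resolve the $0/0$ limits that occur, and simplify.

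The single genuine limit is $\ell:=P_0^*(0,1)$. Since $A_e(1)=A_r(1)=1$, the numerator $p_{0,0}z(1-\alpha^*(\lambda^-))(A_e(z)-1)$ and the denominator $\alpha^*(\lambda^-)(zA_e(z)-A_r(z))+z(1-A_e(z))$ of $P_0^*(0,z)$ both vanish at $z=1$, so I apply l'Hôpital. Writing $m_e:=A_e^{(1)}(1)$, $m_r:=A_r^{(1)}(1)$ for the mean numbers of arrivals during a primary-, resp.\ retrial-, initiated service (from Corollary~\ref{cor1}), differentiating numerator and denominator and setting $z=1$ gives
\[
\ell=\frac{p_{0,0}(1-\alpha^*(\lambda^-))\,m_e}{\alpha^*(\lambda^-)\bigl(1+m_e-m_r\bigr)-m_e}.
\]
In the formula for $K(z)$ the numerator's factor $A_e(z)-1$ vanishes at $z=1$ while the denominator $z-\alpha^*(\lambda^-)A_r(z)$ equals $1-\alpha^*(\lambda^-)\neq 0$ there, so no further limit is needed: $K(1)=\lambda^-\alpha^*(\lambda^-)\ell/(1-\alpha^*(\lambda^-))$. (The denominator of $\ell$ is nonzero exactly under the stability condition \eqref{ergo}, since it equals the numerator of $p_{0,0}$ divided by $\lambda^e\lambda^r$ and $p_{0,0}>0$ is equivalent to \eqref{ergo}.) For the remaining blocks the substitution $z=1$ is immediate apart from the factors $\lambda^k\bigl(1-\beta^*(\lambda_+^k(1-z))\bigr)/\bigl(\lambda_+^k(1-z)\bigr)$ in the $E_4$--$E_5$ and $E_6$--$E_7$ generating functions; with $u=\lambda_+^k(1-z)\to 0$ and $\bigl(1-\beta^*(u)\bigr)/u\to\bar b$ (i.e.\ $-\beta^{*\prime}(0)=\bar b$), these tend to $\lambda^k\bar b$, while $\lambda_+^k(1-z)-\lambda^k\to-\lambda^k$. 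This gives $P_{1,2}^*(0,1)=\lambda^-\tfrac{1-\beta^*(\lambda^e)}{\lambda^e}(p_{0,0}+\ell)$, $P_{1,3}^*(0,1)=\tfrac{1-\beta^*(\lambda^r)}{\lambda^r}K(1)$, $\sum_{k=4}^{5}P_{1,k}^*(0,1)=\lambda^-\bigl(\bar b-\tfrac{1-\beta^*(\lambda^e)}{\lambda^e}\bigr)(p_{0,0}+\ell)$ and $\sum_{k=6}^{7}P_{1,k}^*(0,1)=\bigl(\bar b-\tfrac{1-\beta^*(\lambda^r)}{\lambda^r}\bigr)K(1)$.

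The final step is the algebraic simplification. The key identities, obtained by differentiating the closed form of $A_k(z)$ at $z=1$ and regrouping against the definitions of $t_e,t_r$ (equivalently, by computing $E[N(S)]$ for the relevant modified-Poisson counting processes), are $m_e=t_e/\lambda^e$ and $1-m_r=t_r/\lambda^r$. Substituting, the denominator of $\ell$ becomes $\bigl(\alpha^*(\lambda^-)(\lambda^e t_r+\lambda^r t_e)-\lambda^r t_e\bigr)/(\lambda^e\lambda^r)$, which is precisely $p_{0,0}$ times $\alpha^*(\lambda^-)\bigl[(1+\lambda^-\bar b)\lambda^e t_r+\lambda^-\bar b\lambda^r t_e\bigr]/(\lambda^e\lambda^r)$; hence the $p_{0,0}$ in the numerator of $\ell$ cancels and $\ell=\lambda^r t_e(1-\alpha^*(\lambda^-))/\bigl(\alpha^*(\lambda^-)[(1+\lambda^-\bar b)\lambda^e t_r+\lambda^-\bar b\lambda^r t_e]\bigr)$. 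Then $p_{0,0}+\ell$ has numerator $\alpha^*(\lambda^-)(\lambda^e t_r+\lambda^r t_e)-\lambda^r t_e+\lambda^r t_e(1-\alpha^*(\lambda^-))=\alpha^*(\lambda^-)\lambda^e t_r$, giving $P(C=0)=\lambda^e t_r/\bigl[(1+\lambda^-\bar b)\lambda^e t_r+\lambda^-\bar b\lambda^r t_e\bigr]$, while $K(1)=\lambda^-\lambda^r t_e/\bigl[(1+\lambda^-\bar b)\lambda^e t_r+\lambda^-\bar b\lambda^r t_e\bigr]$; feeding these into the four remaining expressions produces the stated formulas. The main (and essentially only) obstacle is this bookkeeping — steering the telescoping cancellations so that the $p_{0,0}$ dependence drops out and the transcendental terms $\beta^*(\lambda^k)$ recombine cleanly — and a convenient running check is that the five probabilities must add to $1$, which forces their common numerator to equal the common denominator $\lambda^e t_r+\lambda^-\bar b(\lambda^e t_r+\lambda^r t_e)$.
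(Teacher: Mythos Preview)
Your proof is correct and follows essentially the same approach as the paper's: the paper's own proof is a single sentence stating that the result follows from Theorem~\ref{basic} and the normalization condition ``after heavy but straightforward computations,'' and you have supplied those computations explicitly (the l'H\^opital evaluation of $P_0^*(0,1)$, the identities $A_e^{(1)}(1)=t_e/\lambda^e$ and $1-A_r^{(1)}(1)=t_r/\lambda^r$, and the resulting cancellation of $p_{0,0}$). Your use of the normalization identity as a consistency check rather than as an input is a minor stylistic difference, not a different method.
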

\begin{proof}
From the results obtained in Theorem \ref{basic} and using the normalization condition the Corollary \ref{c1} is proved after heavy but straightforward computations.
\end{proof}
The following corollary provides the $TH_{S}$, the expected orbit queue length $E(X)$, and the expected sojourn time $E(S)$.
\begin{corollary}\label{cor1}
We have
\begin{equation}
\begin{array}{rl}
E(X)=&p_{0,0}[(1-\alpha^{*}(\lambda^{-}))(1+\lambda^{-}\bar{b})G-\frac{\lambda^{-}\alpha^{*}(\lambda^{-})}{1-\alpha^{*}(\lambda^{-})}S_{r}]\\
&+\bar{b}F+\lambda^{-}P(C=0)[S_{e}+\frac{\alpha^{*}(\lambda^{-})}{1-\alpha^{*}(\lambda^{-})}S_{r}],\vspace{2mm}\\
%[\frac{\lambda^{e}-\lambda_{+}^{e}}{\lambda^{e}}(\bar{b}-\frac{1-\beta^{*}(\lambda^{e})}{\lambda^{e}})+\frac{\lambda_{+}^{e}\bar{b}^{(2)}}{2}]\\
%&+\frac{\lambda^{-}\alpha^{*}(\lambda^{-})(P(C=0)-p_{0,0})}{1-\alpha^{*}(\lambda^{-})}[\frac{\lambda^{r}-\lambda_{+}^{r}}{\lambda^{r}}(\bar{b}-\frac{1-\beta^{*}(\lambda^{r})}{\lambda^{r}})+\frac{\lambda_{+}^{r}\bar{b}^{(2)}}{2}]\\
%&+[\bar{b}-\frac{1-\beta^{*}(\lambda^{r})}{\lambda^{r}}(1-\alpha^{*}(\lambda^{-}))]F+,\vspace{2mm}\\
TH_{S}=&\frac{\lambda^{-}(\lambda^{e}t_{r}+\lambda^{r}t_{e})}{\lambda^{e}t_{r}+\lambda^{-}\bar{b}(\lambda^{e}t_{r}+\lambda^{r}t_{e})},\vspace{2mm}\\%\scriptsize{\frac{\lambda^{-}\left[\lambda^{e}[\lambda_{+}^{r}(1-\lambda^{r}\bar{b})+\beta^{*}(\lambda^{r})(\lambda^{r}-\lambda_{+}^{r})+\lambda^{r}[\lambda_{+}^{e}\lambda^{e}\bar{b}+(1-\beta^{*}(\lambda^{e})(\lambda^{e}-\lambda_{+}^{e})]\right]}{\lambda^{e}[\lambda_{+}^{r}(1-\lambda^{r}\bar{b})+\beta^{*}(\lambda^{r})(\lambda^{r}-\lambda_{+}^{r})+\lambda^{-}\bar{b}\left[\lambda^{e}[\lambda_{+}^{r}(1-\lambda^{r}\bar{b})+\beta^{*}(\lambda^{r})(\lambda^{r}-\lambda_{+}^{r})+\lambda^{r}[\lambda_{+}^{e}\lambda^{e}\bar{b}+(1-\beta^{*}(\lambda^{e})(\lambda^{e}-\lambda_{+}^{e})]\right]}},\vspace{2mm}\\
E(S)=&\frac{E(X)}{TH_{S}},
\end{array}\label{metr}
\end{equation}
where $p_{0,0}$, $t_{r}$, $t_{e}$, $P(C=0)$ are given in Theorem \ref{basic} and Corollary \ref{c1}, and
\begin{displaymath}
\begin{array}{rl}
S_{k}:=&\frac{\lambda^{k}-\lambda_{+}^{k}}{\lambda^{k}}(\bar{b}-\frac{1-\beta^{*}(\lambda^{k})}{\lambda^{k}})+\frac{\lambda_{+}^{k}\bar{b}^{(2)}}{2},\,k=e,r,\\
G:=&\frac{\alpha^{*}(\lambda^{-})[(2A_{e}^{(1)}(1)+A_{e}^{(2)}(1))(1-A_{r}^{(1)}(1))+A_{e}^{(1)}(1)A_{r}^{(2)}(1)]}{2[A_{e}^{(1)}(1)(\alpha^{*}(\lambda^{-})-1)+\alpha^{*}(\lambda^{-})(1-A_{r}^{(1)}(1))]^{2}},\vspace{2mm}\\
F:=&\frac{[p_{0,0}(1-\alpha^{*}(\lambda^{-}))G+A_{e}^{(1)}(1)P(C=0)]-P_{0}^{*}(0,1)(1-\alpha^{*}(\lambda^{-}A_{r}^{(1)}(1)))}{(1-\alpha^{*}(\lambda^{-}))^{2}},
\end{array}
\end{displaymath}
with
\begin{displaymath}
\begin{array}{rl}
A_{k}^{(1)}(1)=&\frac{(\lambda^{k}-\lambda_{+}^{k})(1-\beta^{*}(\lambda^{k}))+\lambda^{k}\lambda_{+}^{k}\bar{b}}{\lambda^{k}},\\
A_{k}^{(2)}(1)=&\lambda_{+}^{k}(2\bar{b}+\lambda_{+}^{k}\bar{b}^{(2)})-2\frac{\lambda_{+}^{k}}{\lambda^{k}}A_{k}^{(1)}(1),
\end{array}
\end{displaymath}
for $k=e,r$.
\end{corollary}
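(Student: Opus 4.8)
The plan is to extract all three quantities from the transforms in Theorem~\ref{basic} and the server-state probabilities in Corollary~\ref{c1}, together with two elementary identities: a rate-balance relation and Little's law.

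I would first obtain $TH_S$ by rate conservation. A busy period of the server consists of exactly one service, so the long-run rate of service completions equals the fraction of time the server is busy divided by the mean service time, that is $TH_S=(1-P(C=0))/\bar b$. Substituting $P(C=0)$ from Corollary~\ref{c1} and using the identity $(1+\lambda^-\bar b)\lambda^e t_r+\lambda^-\bar b\lambda^r t_e=\lambda^e t_r+\lambda^-\bar b(\lambda^e t_r+\lambda^r t_e)$ gives the stated formula. The mean sojourn time then follows from Little's law applied to the orbit: in steady state, $TH_S$ is the rate at which customers enter, and eventually leave, the system, hence $E(S)=E(X)/TH_S$.

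The core of the proof is $E(X)$. The pgf of the orbit size at an arbitrary epoch is $\widetilde\Pi(z)=p_{0,0}+P_0^*(0,z)+\sum_{k=2}^{7}P_{1,k}^*(0,z)$, which equals $1$ at $z=1$ by the choice of $p_{0,0}$, so $E(X)=\widetilde\Pi'(1)$. To differentiate the individual transforms I would first record $A_k(1)=1$ along with $A_k^{(1)}(1)$ and $A_k^{(2)}(1)$; these follow by differentiating the closed form of $A_k(z)$ twice at $z=1$ (the denominator $\lambda_+^k(1-z)-\lambda^k$ equals $-\lambda^k\neq0$ there, so only a Taylor expansion of $\beta^*(\lambda_+^k(1-z))$ through the $\bar{b}^{(2)}$ term is needed), and they coincide with the expressions listed at the end of the statement. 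Now $P_0^*(0,z)$ is of the form $0/0$ at $z=1$, since $A_e(z)-1$ and the whole denominator vanish there; a second-order expansion of numerator and denominator in $(z-1)$ yields $P_0^*(0,1)=P(C=0)-p_{0,0}$ and, after simplification, its derivative at $z=1$ in terms of the quantity $G$. The transform $K(z)$ is regular at $z=1$ because its denominator is $1-\alpha^*(\lambda^-)\neq0$, so $K(1)$ and $K'(1)$ come directly from the quotient rule, and $K'(1)$ produces the quantity $F$. Finally, $P_{1,2}^*$, $P_{1,3}^*$, $\sum_{k=4}^{5}P_{1,k}^*$ and $\sum_{k=6}^{7}P_{1,k}^*$ are explicit scalar multiples of $p_{0,0}+P_0^*(0,z)$ or of $K(z)$, whose prefactors become regular at $z=1$ once the indeterminate factor $\frac{1-\beta^*(\lambda_+^k(1-z))}{\lambda_+^k(1-z)}\to\bar b$ is resolved; differentiating these prefactors generates the terms $S_k=\frac{\lambda^k-\lambda_+^k}{\lambda^k}(\bar b-\frac{1-\beta^*(\lambda^k)}{\lambda^k})+\frac{\lambda_+^k\bar{b}^{(2)}}{2}$, $k=e,r$, together with $\bar b$-multiples of the server-state probabilities. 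Summing the five contributions and collecting terms produces the displayed expression for $E(X)$.

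The only genuine obstacle is the bookkeeping: several transforms are indeterminate at $z=1$ and must be expanded to second order, after which the resulting sum of derivatives has to be reorganised into the compact combination of $p_{0,0}$, $P(C=0)$, $P_0^*(0,1)$, $G$, $F$, $S_e$ and $S_r$. No idea beyond Theorem~\ref{basic} and Corollary~\ref{c1} is needed, which is why the computation is merely heavy but straightforward.
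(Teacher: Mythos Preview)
Your argument for $E(X)$ is essentially the paper's: differentiate each transform of Theorem~\ref{basic} at $z=1$, resolve the $0/0$ in $P_0^*(0,z)$ by a second-order expansion (producing $G$), differentiate $K(z)$ directly at its regular point (producing $F$), expand the prefactors of the busy-state transforms (producing $S_e,S_r$), and sum. The paper records exactly the same intermediate derivatives in \eqref{opa} and then sums them.

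For $TH_S$ you take a genuinely different and shorter route. The paper \emph{defines} the throughput as the stationary mean arrival rate,
\[
TH_S=\lambda^{-}P_{0}^{*}(0,1)+\lambda^{e}P_{1,2}^{*}(0,1)+\lambda_{+}^{e}\sum_{k=4}^{5}P_{1,k}^{*}(0,1)+\lambda^{r}P_{1,3}^{*}(0,1)+\lambda_{+}^{r}\sum_{k=6}^{7}P_{1,k}^{*}(0,1),
\]
and then simplifies using Corollary~\ref{c1}. Your renewal-reward identity $TH_S=(1-P(C=0))/\bar b$ bypasses all of this and reduces the computation to a single substitution from Corollary~\ref{c1}. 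The paper's route makes the ``effective arrival rate'' interpretation explicit (useful for the optimisation in Section~\ref{num}); yours is faster and makes the equality of the arrival and departure rates immediate.

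One small point of precision in your Little's-law step: $E(X)$ counts only orbiting customers while $TH_S$ is the total system throughput, so $E(S)=E(X)/TH_S$ is the mean orbit time averaged over \emph{all} served customers (those who go straight into service contribute zero). That is indeed the quantity the paper records, but it is worth saying so rather than calling it ``Little's law applied to the orbit''.
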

\begin{proof}
See \ref{ap4}.
\end{proof}
\subsection{Asymptotic behaviour under high rate of retrials}
Let $P(z)=p_{0,0}+P^{*}_{0}(0,z)+x\sum_{k=2}^{7}P^{*}_{1,k}(0,z)$. Then,
\begin{displaymath}
\begin{array}{c}
\lim_{a^{*}(\lambda^{-})\to 1}P(z)=P^{(\infty)}(z),
\end{array}
\end{displaymath}
where $P^{(\infty)}(z)$ is the pgf of the number of customers in the system obtained in the seminal paper \cite{legros2018}, by assuming that $\lambda^{r}=\lambda^{-}$, and $\lambda^{e}=\lambda_{+}^{e}=\lambda_{+}^{r}=\lambda^{+}$. Note that in such a case, the customers who find the server busy repeat their
calls almost immediately. Note also that if we further assume $\lambda^{-}=\lambda_{+}=\lambda$, $P^{(\infty)}(z)$ coincides with the pgf of the number of customers in the standard M/G/1 queue.
\begin{theorem}
As $\alpha^{*}(\lambda^{-})\to 1$,
\begin{displaymath}
\begin{array}{c}
\frac{2\lambda^{r}t_{e}(1-\alpha^{*}(\lambda^{-}))}{\alpha^{*}(\lambda^{-})[(1+\lambda^{-}\bar{b})\lambda^{e}tr+\lambda^{-}\bar{b}\lambda^{r}t_{e}]}\leq\sum_{n=0}^{\infty}[P(X=n)-P^{(\infty)}(X=n)]\leq \frac{2\lambda^{r}t_{e}(1-\alpha^{*}(\lambda^{-}))}{\alpha^{*}(\lambda^{-})\lambda^{e}tr}.
\end{array}
\end{displaymath}
\end{theorem}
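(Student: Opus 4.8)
As written the signed sum vanishes identically (both $P$ and $P^{(\infty)}$ are probability generating functions), so the quantity to be bounded is the $\ell^{1}$-distance $\sum_{n\ge0}\big|\mathbb{P}(X=n)-\mathbb{P}^{(\infty)}(X=n)\big|=2\|P-P^{(\infty)}\|_{\mathrm{TV}}$. The plan is to work with the explicit pgf of the number in system, $P(z)=p_{0,0}+P_{0}^{*}(0,z)+z\sum_{k=2}^{7}P_{1,k}^{*}(0,z)$: substituting \eqref{t1s1} and $K(z)$ from Theorem~\ref{basic} and clearing the common denominators $\alpha^{*}(\lambda^{-})(zA_{e}(z)-A_{r}(z))+z(1-A_{e}(z))$ and $z-\alpha^{*}(\lambda^{-})A_{r}(z)$, $P(z)$ becomes an explicit rational function of $z$, $\alpha^{*}(\lambda^{-})$ and $A_{e}(z),A_{r}(z)$. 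The reference law is $P^{(\infty)}(z)=\lim_{\alpha^{*}(\lambda^{-})\to1}P(z)$, whose existence is the statement preceding the theorem; since each $\mathbb{P}(X=n)=[z^{n}]P(z)$ is a rational, hence continuous, function of $\alpha^{*}(\lambda^{-})$, $\mathbb{P}^{(\infty)}(X=n)$ is its $\alpha^{*}(\lambda^{-})\to1$ limit coefficientwise. The pivotal fact is that at $\alpha^{*}(\lambda^{-})=1$ the seeking phase is instantaneous and $P(z)\equiv P^{(\infty)}(z)$ identically in $z$; hence the numerator of $P(z)-P^{(\infty)}(z)$, put over a single denominator, vanishes at $\alpha^{*}(\lambda^{-})=1$, so that $P(z)-P^{(\infty)}(z)=(1-\alpha^{*}(\lambda^{-}))\,g(z)$ for an explicit $g$ (compare the decomposition-type identity in the remark after Theorem~\ref{t2}).

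For the lower bound I keep only the $n=0$ term. The system is empty only in state $(0,0,E_{1})$, so $\mathbb{P}(X=0)=p_{0,0}$, whereas $\mathbb{P}^{(\infty)}(X=0)=\lim_{\alpha^{*}(\lambda^{-})\to1}p_{0,0}=P(C=0)$; by Corollary~\ref{c1},
\[
\mathbb{P}^{(\infty)}(X=0)-\mathbb{P}(X=0)=P(C=0)-p_{0,0}=P_{0}^{*}(0,1)=\frac{\lambda^{r}t_{e}\,(1-\alpha^{*}(\lambda^{-}))}{\alpha^{*}(\lambda^{-})\big[(1+\lambda^{-}\bar{b})\lambda^{e}t_{r}+\lambda^{-}\bar{b}\,\lambda^{r}t_{e}\big]}.
\]
Since $c_{n}:=\mathbb{P}(X=n)-\mathbb{P}^{(\infty)}(X=n)$ satisfies $\sum_{n\ge0}c_{n}=0$, the triangle inequality gives $\sum_{n\ge0}|c_{n}|\ge|c_{0}|+\big|\sum_{n\ge1}c_{n}\big|=2|c_{0}|$, which is precisely the claimed lower bound.

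For the upper bound, note that $g(1)=0$ (as $P(1)=P^{(\infty)}(1)=1$), so splitting $g$ into its nonnegative- and nonpositive-coefficient parts, $g=g^{+}-g^{-}$ with $g^{+}(1)=g^{-}(1)$, one gets the identity $\sum_{n\ge0}|c_{n}|=2(1-\alpha^{*}(\lambda^{-}))\,g^{+}(1)$. It therefore remains to estimate $g^{+}(1)$ from the explicit form of $g$: the computation identifies $g^{+}(1)=\lambda^{r}t_{e}/\big(\alpha^{*}(\lambda^{-})D\big)$, where $D=(1+\lambda^{-}\bar{b})\lambda^{e}t_{r}+\lambda^{-}\bar{b}\lambda^{r}t_{e}=\lambda^{e}t_{r}+\lambda^{-}\bar{b}(\lambda^{e}t_{r}+\lambda^{r}t_{e})$ (this incidentally shows the lower bound above is sharp, the displaced mass sitting at $0$ on one side and spread over $\{1,2,\dots\}$ on the other). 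Finally, under the natural ordering of the arrival rates (the remark after Table~\ref{t1}) one has $t_{e}>0$ at once, and combining the stability condition \eqref{ergo} with that ordering forces $\lambda^{r}\bar{b}<\big[\lambda_{+}^{r}+(\lambda^{r}-\lambda_{+}^{r})\beta^{*}(\lambda^{r})\big]/\lambda_{+}^{r}$, hence $t_{r}>0$; consequently $\lambda^{-}\bar{b}(\lambda^{e}t_{r}+\lambda^{r}t_{e})\ge0$, so $D\ge\lambda^{e}t_{r}>0$ and
\[
\sum_{n\ge0}|c_{n}|=2(1-\alpha^{*}(\lambda^{-}))\,g^{+}(1)=\frac{2\lambda^{r}t_{e}(1-\alpha^{*}(\lambda^{-}))}{\alpha^{*}(\lambda^{-})D}\le\frac{2\lambda^{r}t_{e}(1-\alpha^{*}(\lambda^{-}))}{\alpha^{*}(\lambda^{-})\lambda^{e}t_{r}},
\]
which is the upper bound. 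The main obstacle is the single middle step of the upper bound: extracting $g$ explicitly from the assembled rational function and verifying that $g^{+}(1)=\lambda^{r}t_{e}/(\alpha^{*}(\lambda^{-})D)$, i.e., that after removing the factor $1-\alpha^{*}(\lambda^{-})$ the total displaced mass is exactly this and no more. The auxiliary sign facts $t_{e},t_{r}>0$ are a short consequence of \eqref{ergo} and the arrival-rate ordering, and everything else is routine algebra.
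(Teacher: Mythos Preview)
Your reading of the statement as an $\ell^{1}$ bound is correct, and your lower bound is clean: $p_{0,0}^{(\infty)}-p_{0,0}=P_{0}^{*}(0,1)=\lambda^{r}t_{e}(1-\alpha^{*}(\lambda^{-}))/(\alpha^{*}(\lambda^{-})D)$ checks out, and $\sum_{n}|c_{n}|\ge 2|c_{0}|$ follows from $\sum_{n}c_{n}=0$ exactly as you say.

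The upper bound, however, has a genuine gap. Your assertion $g^{+}(1)=\lambda^{r}t_{e}/(\alpha^{*}(\lambda^{-})D)$ is \emph{equivalent} to the claim that $c_{n}\ge 0$ for every $n\ge 1$: since $g(1)=0$ one has $\sum_{n\ge 1}[g]_{n}=-[g]_{0}=\lambda^{r}t_{e}/(\alpha^{*}(\lambda^{-})D)$, and $g^{+}(1)$ equals this only if no coefficient $[g]_{n}$ with $n\ge 1$ is negative. That is a sign condition on infinitely many Taylor coefficients, not something that drops out of the closed form of $g(z)$ by algebra; it is a stochastic-comparison statement, and in fact it fails in general even when a convolution decomposition $\pi=\pi^{(\infty)}*h$ is available, since then $c_{n}=-\pi_{n}^{(\infty)}(1-h_{0})+\sum_{k<n}\pi_{k}^{(\infty)}h_{n-k}$ can be negative for some $n\ge 1$. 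So your route, if it worked, would give the exact value $\sum_{n}|c_{n}|=2|c_{0}|$ (making the paper's upper bound slack), but the step you flag as ``the main obstacle'' is not a computation at all---it is an unproved monotonicity claim.

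The paper's route, citing Artalejo, is different and sidesteps this. Rewrite the stated upper bound using $p_{0,0}^{(\infty)}=\lambda^{e}t_{r}/D$: it is exactly $2\bigl(1-p_{0,0}/p_{0,0}^{(\infty)}\bigr)$. Artalejo's argument obtains this from a decomposition $P(z)=P^{(\infty)}(z)\,H(z)$ with $H$ a bona fide pgf (here under the identification $\lambda^{r}=\lambda^{-}$, $\lambda^{e}=\lambda_{+}^{e}=\lambda_{+}^{r}=\lambda^{+}$ assumed just before the theorem, so that $P^{(\infty)}$ is the Legros pgf). From $\pi_{n}=\sum_{k\le n}\pi_{k}^{(\infty)}h_{n-k}$ one gets $|c_{n}|\le \pi_{n}^{(\infty)}(1-h_{0})+\sum_{k<n}\pi_{k}^{(\infty)}h_{n-k}$, and summing yields $\sum_{n}|c_{n}|\le 2(1-h_{0})=2\bigl(1-P(0)/P^{(\infty)}(0)\bigr)$, the desired upper bound. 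No coefficient-by-coefficient sign analysis is needed, and your inequality $D\ge\lambda^{e}t_{r}$ (with the attendant discussion of $t_{e},t_{r}>0$) becomes unnecessary.
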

\begin{proof}
The proof of follows the steps given in the paper \cite{artatop}, and further details are omitted.
\end{proof}
Therefore, we have derived a measure of the proximity between the steady state distributions for the classical M/G/1 queueing system with event dependent arrivals \cite{legros2018} and our
queueing system. The importance of these bounds is to provide upper and
lower estimates for the distance between both distributions.
\section{Numerical results}\label{num}
Our aim here is two fold. First, to focus on the effect of event-dependency on system performance. In particular, to investigate how the event-dependent arrivals, i.e., the customers' behaviour based on the last realized event, affect the major performance metrics of our system, and to investigate a possible strategic behaviour; see subsection \ref{ex1}. Second, we aim to investigate the admission control problem. In particular, a controller has to determine at arrival of a new customer whether we allow him/her to enter the system or whether we will reject him/her. We set up and solve constrained optimisation problems that maximize the system generated throughput subject to certain constraints on the expected number of orbiting customers; see subsection \ref{ex2}.
\subsection{Example 1: Effect of event-dependency on system performance}\label{ex1}
Let $(\lambda^{e},\lambda_{+}^{e},\lambda^{r},\lambda_{+}^{r})=\lambda^{+} q=\lambda^{+} (q_{1},q_{2},q_{3},q_{4})$. Set $\lambda^{+}=0.3$, $\mu=2.5$ and $q=(q_{1},q_{2},q_{3},q_{4})=(0.5,0.4,0.6,0.4)$. Moreover, let $B\sim Erlang(M,\mu)$, $A\sim Erlang(N,\alpha)$.

Our aim is to investigate the effect of phases of service times and retrial times on $E(X)$ for increasing values of $\lambda^{-}$. Moreover, we aim to investigate how the relation among $\lambda^{-}$ and $\lambda^{+}$ affects $E(X)$.
\begin{figure}
\centering
\includegraphics[scale=0.45]{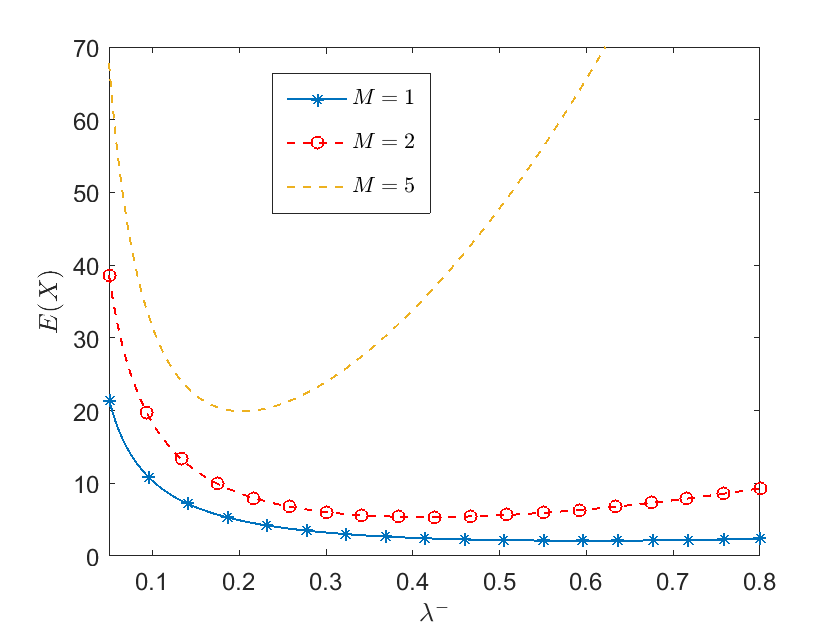}
\caption{Effect of service phases when $N=2$, $\alpha=3.5$.}
\label{st0}
\end{figure}

Note that by increasing the number of phases of service times, $E(X)$ is also increasing as expected (see Figure \ref{st0}). Moreover, as $\lambda^{-}<\lambda^{+}$, $E(X)$ decreases, and as $\lambda^{-}\geq \lambda^{+}$, $E(X)$ increases. Thus, under such a setting, by keeping the arrival rates after a service time lower than the arrival rates after an arrival, we ensure a better performance. This is due to the fact that keeping lower as possible $\lambda^{-}$ with respect to $\lambda^{+}$, we give better chances for the blocked (i.e., retrial customers) to connect with the server. Thus, under the current setting, the customers feel more comfortable to arrive after an arrival, and connect with the server as retrial customers (as long as $\lambda^{-}<\lambda^{+}$).
\begin{figure}
\centering
\includegraphics[scale=0.45]{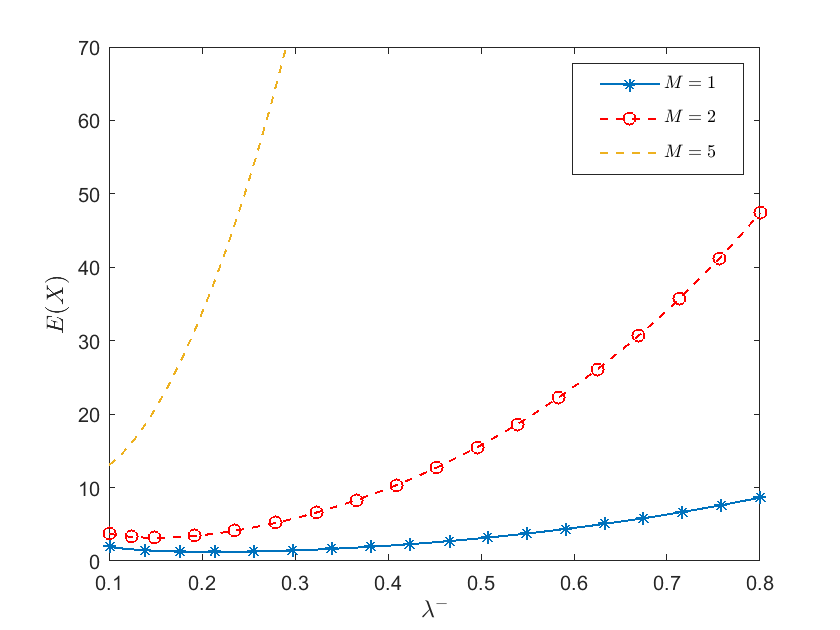}
\caption{Effect of decreasing $\alpha$ from 3.5 to 1.5 ($N=2$).}
\label{st}
\end{figure}

Figure \ref{st} indicate that by decreasing $\alpha$ from 3.5 to 1.5, we cannot have the advantage of the previous setting. Thus, even if $\lambda^{-}<\lambda^{+}$, by increasing $\lambda^{-}$, $E(X)$ increases as expected.
\begin{figure}
\centering
\includegraphics[scale=0.45]{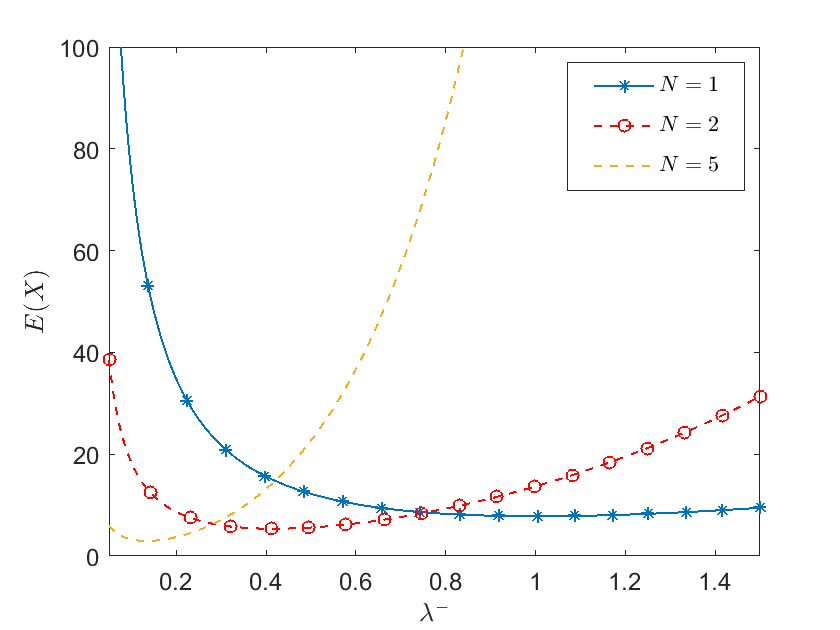}
\caption{Effect of phases of retrial times ($\alpha=3.5$, $M=2$).}
\label{st1}
\end{figure}
Furthermore, by increasing the number of phases of retrial times, we loose the advantage of the previous setting (see Figure \ref{st1}). This is because in such a case, there is a longer delay for the retrial customers to connect with the server. 
\subsection{Example 2: Admission control}\label{ex2}
Our goal is to determine the optimal joining probabilities $q=(q_{1},q_{2},q_{3},q_{4})$ that maximize the throughput ($TH_{S}$) generated by the system, subject to constraints on the maximum attained service level on $E(X)$, and the stability.

\begin{displaymath}
\left\{\begin{array}{l}
\text{Maximize }TH_{S},\vspace{2mm}\\
\text{subject to}\vspace{2mm}\\
E(X)\leq \overline{E(X)},\vspace{2mm}\\
\bar{b}<\frac{\alpha^{*}(\lambda^{-})[\lambda_{+}^{r}+(\lambda^{r}-\lambda_{+}^{r})\beta^{*}(\lambda^{r})]}{\lambda^{r}[\lambda_{+}^{e}+(\lambda_{+}^{r}-\lambda_{+}^{e})\alpha^{*}(\lambda^{-})]}-\frac{(1-\alpha^{*}(\lambda^{-}))(\lambda^{e}-\lambda_{+}^{e})\beta^{*}(\lambda^{e})}{\lambda^{e}[\lambda_{+}^{e}+(\lambda_{+}^{r}-\lambda_{+}^{e})\alpha^{*}(\lambda^{-})]},\\
q_{2}<q_{1},\,q_{4}<q_{3},\\
%\text{(if a customer knows that another one}\\
%\text{has already arrived, it is less likely to join) }
0\leq q_{i}\leq 1,\,i=1,2,3,4,
\end{array}\right.
\end{displaymath}
where $\bar{b}=\frac{M}{\mu}$, $\beta^{*}(s)=(\frac{\mu}{\mu+s})^{M}$, $\alpha^{*}(s)=(\frac{\alpha}{\alpha+s})^{N}$, and $(\lambda^{e},\lambda_{+}^{e},\lambda^{r},\lambda_{+}^{r})=\lambda^{+}\times (q_{1},q_{2},q_{3},q_{4})$. Note here that we added two constraints, related to the ordering if the admission probabilities $(q_{1},q_{2},q_{3},q_{4})$. In Tables \ref{opt1}, \ref{on}, \ref{on1} we assumed that $q_{2}<q_{1}$, $q_{4}<q_{3}$, having in mind that if a customer knows that another one has already arrived, it is less likely to join. In Table \ref{nont}, we excluded them, and we observed a different behaviour.

Set $\lambda^{+}=2$, $\mu=1.5$, $\alpha=3$, $M=4$, $N=3$, $\overline{E(X)}=20$. The optimal joining probabilities as functions of $\lambda^{-}$ that maximize $TH_{S}$ are given in Table \ref{opt1}. We observe that by increasing $\lambda^{-}$, $TH_{S}$ increases too. It seems that when $\lambda^{-}<\lambda^{+}$, it is better to reject newly arriving customers after the occupation of the server by a primary customer (i.e., small $q_{1}$, $q_{2}$ compared with $q_{3}$, $q_{4}$). When $\lambda^{-}=\lambda^{+}$, $TH_{S}$ is maximized by rejecting all the newly arriving customers that arrive after the occupation of the server by a retrial customer. 
\begin{table}

\centering
\begin{tabular}{|c| |c|c|}\hline
$\lambda^{-}$ & $q_{opt}=(q_{1},q_{2},q_{3},q_{4})$& Optimal $TH_{S}$\\
\hline \hline
0.1 & $(0.0001,0.0001,0.3148,0.1431)$ & $0.0788$\\
1 &$(0.0547,0.0287,0.1719,0.033)$ & $0.3082$\\
2 & $(0.0727,0.027,0,0)$ & $0.3289$\\
4&$(0.0094,0.0048,0.0962,0.0271)$&$0.3452$\\
6.1&$(0.0006,0.0003,0.2856,0.0687)$&$0.354$\\
\hline
\end{tabular}
\caption{Optimal values of joining probabilities as functions of $\lambda^{-}$.}
\label{opt1}
\end{table}

Set $\lambda^{-}=1$, $\lambda^{+}=0.5$, $\mu=1.5$, $\alpha=3$, $M=4$, $\overline{E(X)}=20$. In Table \ref{on} we observe that system performance decreases as the number of phases of retrial times increase. We observe that when $N=30$, $TH_{S}$ will be maximized if we reject customers that arrive after the arrival of a primary customer, and it is more likely to accept customers that arrive after a successful retrial that occupies the idle server.
\begin{table}
\centering
\begin{tabular}{|c| |c|c|}\hline
$N$ & $q_{opt}=(q_{1},q_{2},q_{3},q_{4})$& Optimal $TH_{S}$\\
\hline \hline
1 & $(1,0,0.7199,0)$ & $0.328$\\
2 &$(0.4614,0.1678,0.6339,0.1532)$ & $0.3221$\\
5 & $(0.2669,0.0383,0.0309,0.0157)$ & $0.2916$\\
15&$(0.0062,0.0031,0.3373,0.1446)$&$0.2737$\\
30&$(0.0001,0,0.3269,0.1533)$&$0.2727$\\
\hline
\end{tabular}
\caption{Optimal values of joining probabilities as functions of $N$.}\label{on}
\end{table}
\begin{table}
\centering
\begin{tabular}{|c| |c|c|}\hline
$M$ & $q_{opt}=(q_{1},q_{2},q_{3},q_{4})$& Optimal $TH_{S}$\\
\hline \hline
1 & $(0.4755,0.4755,1,0)$ & $0.6702$\\
2 &$(0.4259,0.0974,0.7478,0.6007)$ & $0.4958$\\
5 & $(0.2721,0.0415,0.1334,0.0512)$ & $0.2484$\\
15&$(0.0104,0.0053,0.4011,0.0143)$&$0.0936$\\
\hline
\end{tabular}
\caption{Optimal values of joining probabilities as functions of $M$.}\label{on1}
\end{table}
Under the same setting, but now fixing $N=4$, and by varying $M$, we observe in Table \ref{on1} that the optimal admission probabilities are more sensitive on the number of service phases compared with the number of retrieval phases. In particular, if the number of service phases increases, then the maximum throughput decreases very fast.

In Table \ref{nont} we derive optimal joining probabilities by excluding the constraints $q_{2}<q_{1}$, $q_{4}<q_{3}$. We can now observe that there is no specific trend on the values of the joining probabilities as in Table \ref{on}, while most of the cases $q_{2}>q_{1}$ and $q_{4}>q_{3}$. Similarly, to the case in Table \ref{on}, the number of phases of the retrieval times heavily affect the optimal values of the joining probabilities. 
\begin{table}
\centering
\begin{tabular}{|c| |c|c|}\hline
$N$ & $q_{opt}=(q_{1},q_{2},q_{3},q_{4})$& Optimal $TH_{S}$\\
\hline \hline
1 & $(1,0,0.7199,0)$ & $0.328$\\
2 &$(0.0933,0.7073,0.2362,0.5123)$ & $0.2908$\\
5 & $(0.0744,0.714,0.2016,0.5106)$ & $0.287$\\
15&$(0.179,0.9938,0.0001,0.7527)$&$0.295$\\
30&$(0.0293,00374,0.1065,0.0439)$&$0.276$\\
\hline
\end{tabular}
\caption{Optimal values of joining probabilities.}\label{nont}
\end{table}
\section{Conclusion \& future work}
In this work, we introduced the concept of \textit{event-dependent} arrivals in the retrial setting. We studied the stability condition, and investigated the stationary behaviour both at service completion, and at an arbitrary epoch. Explicit expressions for various performance metrics are derived, and used to numerically investigate the effect of event-dependency on system's performance. It seems that event-dependency is a result of customer's strategic behaviour. We perform constrained optimization problems that shown insights into the effect of event dependency on the admission control problem.

Several questions are open for future research. For instance, to formaly investigate the admission control problem and determine optimal policies. Another option is to investigate the possibility of obtaining the performance metrics through the QMCD (Queueing \& Markov Chain Decomposition) method \cite{baron}. Moreover, it would be worth investigating the effect of event-dependency on retrial and/or service times. Furthermore, to consider the case of linear retrial policy, which also allows blocked customers to retry for service.
\section*{Declaration of Competing Interest}
The author declares that he has no known competing financial interests or personal relationships that could have appeared to influence the work reported in this paper.
\appendix
\section{Proof of Theorem \ref{erg}}\label{ap1}(Sufficiency) We use standard Foster-Lyapunov arguments. The mean
drifts are given by:
\begin{displaymath}
\begin{array}{rl}
x_{n}=&E(X_{i+1}-X_{i}|X_{i}=n)=E(A_{i+1}(B_{i+1})|X_{i}=n)-E(B_{i+1}|X_{i}=n)\vspace{2mm}\\
=&(1-\alpha^{*}(\lambda^{-})(1-\delta_{0n}))A_{e}^{\prime}(1)+\alpha^{*}(\lambda^{-})(1-\delta_{0n}))(A_{r}^{\prime}(1)-1),
\end{array}
\end{displaymath} 
where $\delta_{0n}$ denotes Kronecker’s delta and
\begin{displaymath}\begin{array}{c}
A_{k}^{\prime}(1)=\frac{d}{dz}A_{k}(z)|_{z=1}=\sum_{n=0}^{\infty}nb_{n}^{k},\,k=pr,re.
\end{array}
\end{displaymath}Suppose that 
\begin{displaymath}
\begin{array}{c}
(1-\alpha^{*}(\lambda^{-}))A_{e}^{\prime}(1)+\alpha^{*}(\lambda^{-})A_{r}^{\prime}(1)<\alpha^{*}(\lambda^{-}).
\end{array}
\end{displaymath}
Then,
\begin{displaymath}
\begin{array}{c}
\epsilon=\frac{1}{2}[\alpha^{*}(\lambda^{-})-(1-\alpha^{*}(\lambda^{-}))A_{e}^{\prime}(1)-\alpha^{*}(\lambda^{-})A_{r}^{\prime}(1)]>0,
\end{array}
\end{displaymath}
and there exists
\begin{displaymath}
\begin{array}{rl}
\lim_{n\to\infty}x_{n}=&(1-\alpha^{*}(\lambda^{-}))A_{e}^{\prime}(1)+\alpha^{*}(\lambda^{-})A_{r}^{\prime}(1)-\alpha^{*}(\lambda^{-})\\
=&-2\epsilon<-\epsilon.
\end{array}
\end{displaymath}
Hence, $x_{n}<-\epsilon$ for
all the states except a finite number. Therefore,
\begin{equation}
\begin{array}{c}
(1-\alpha^{*}(\lambda^{-}))A_{e}^{\prime}(1)+\alpha^{*}(\lambda^{-})A_{r}^{\prime}(1)<\alpha^{*}(\lambda^{-}),
\end{array}\label{ergd}
\end{equation}
is a sufficient condition for the ergodicity of the embedded MC. After straightforward computations \eqref{ergd} reduces to \eqref{ergo}.\\
\textbf{(Necessity)} The necessity part can be proved using the Kaplan's condition and further details are omitted (The necessity can also proved using the generating function approach; see Theorem \ref{basic}).

\section{Proof of Lemma \ref{th3}}\label{ap2}
Considering the evolution of $\{(C(t),X(t),I(t),Z(t));t\geq 0\}$ in the interval $[0,t+dt]$ and conditioning on its value at time $t$ we have for $dt\to 0^{+}$, the equations
\begin{eqnarray}
\frac{d}{dt}p_{0,0}(t)=-\lambda^{-}p_{0,0}(t)+\sum_{k=2}^{3}p_{1,0}^{(k)}(0,t),\label{y1}\\
(\frac{\partial}{\partial t}-\frac{\partial}{\partial r})p_{0,j}(r,t)=-\lambda^{-}p_{0,j}(r,t)+a(r)\sum_{k=2}^{7}p_{1,j}^{(k)}(0,t),\,j\geq 1,\\
(\frac{\partial}{\partial t}-\frac{\partial}{\partial r})p_{1,j}^{(2)}(r,t)=-\lambda^{e}p_{1,j}^{(2)}(r,t)+\lambda^{-}p_{0,j}(t)b(r),\,j\geq 0,\\
(\frac{\partial}{\partial t}-\frac{\partial}{\partial r})p_{1,j}^{(3)}(r,t)=-\lambda^{r}p_{1,j}^{(3)}(r,t)+p_{0,j+1}(0,t)b(r),\,j\geq 0,\\
(\frac{\partial}{\partial t}-\frac{\partial}{\partial r})p_{1,j}^{(4)}(r,t)=-\lambda_{+}^{e}p_{1,j}^{(4)}(r,t)+\lambda^{e}p_{1,j-1}^{(2)}(r,t),\,j\geq 1,\\
(\frac{\partial}{\partial t}-\frac{\partial}{\partial r})p_{1,j}^{(5)}(r,t)=-\lambda_{+}^{e}p_{1,j}^{(5)}(r,t)+\lambda_{+}^{e}(p_{1,j-1}^{(5)}(r,t)+p_{1,j-1}^{(4)}(r,t)),\,j\geq 2,\\
(\frac{\partial}{\partial t}-\frac{\partial}{\partial r})p_{1,j}^{(6)}(r,t)=-\lambda_{+}^{r}p_{1,j}^{(6)}(r,t)+\lambda^{r}p_{1,j-1}^{(3)}(r,t),\,j\geq 1,\\
(\frac{\partial}{\partial t}-\frac{\partial}{\partial r})p_{1,j}^{(7)}(r,t)=-\lambda_{+}^{r}p_{1,j}^{(7)}(r,t)+\lambda_{+}^{r}(p_{1,j-1}^{(7)}(r,t)+p_{1,j-1}^{(6)}(r,t)),\,j\geq 2\label{y8}.
\end{eqnarray}
Letting $t\to\infty$, equations \eqref{y1}-\eqref{y8} reduce to those given in \eqref{bnm}.
%\section*{References}
\section{Proof of Theorem \ref{basic}}\label{ap3}
Multiplying the second in \eqref{bnm} with $e^{-sr}$, integrating with respect to $s\in[0,\infty)$, and having in mind that $p_{0,j}^{*}(s)=\int_{0}^{\infty}e^{-sr}p_{0,j}(r)dr$,  yields
\begin{displaymath}
\begin{array}{rl}
(s-\lambda^{-})p_{0,j}^{*}(s)=p_{0,j}(0)-\alpha^{*}(s)\sum_{k=2}^{7}p_{1,j}^{(k)}(0).
\end{array}
\end{displaymath}
Multiplying with $z^{j}$, and summing for all $j\geq 0$ yields
\begin{equation}
\begin{array}{rl}
(s-\lambda^{-})P_{0}^{*}(s,z)=P_{0}(0,z)-\alpha^{*}(s)[\sum_{k=2}^{7}P_{1}^{(k)}(0,z)-\lambda^{-}p_{0,0}].
\end{array}\label{xc}
\end{equation}
For $s=\lambda^{-}$ we obtain, 
\begin{equation}
\begin{array}{c}
P_{0}(0,z)=\alpha^{*}(\lambda^{-})[\sum_{k=2}^{7}P_{1}^{(k)}(0,z)-\lambda^{-}p_{0,0}],
\end{array}\label{bv}
\end{equation}
and substituting back in \eqref{xc},
\begin{equation}
\begin{array}{c}
P_{0}^{*}(s,z)=\frac{\alpha^{*}(\lambda^{-})-\alpha^{*}(s)}{s-\lambda^{-}}[\sum_{k=2}^{7}P_{1}^{(k)}(0,z)-\lambda^{-}p_{0,0}].
\end{array}\label{b1}
\end{equation}
By repeating the same procedure for the third in \eqref{bnm}, we obtain
\begin{equation}
\begin{array}{c}
(s-\lambda^{e})P_{1,2}^{*}(s,z)=P_{1,2}(0,z)-\lambda^{-}\beta^{*}(s)[p_{0,0}+P_{0}^{*}(0,z)],
\end{array}\label{b02}
\end{equation}
and setting $s=\lambda^{e}$ yields
\begin{displaymath}
P_{1,2}(0,z)=\lambda^{-}\beta^{*}(\lambda^{e})[p_{0,0}+P_{0}^{*}(0,z)].
\end{displaymath}
Substituting back in \eqref{b02} yields
\begin{equation}
\begin{array}{c}
P_{1,2}^{*}(s,z)=\frac{\lambda^{-}}{s-\lambda^{e}}(\beta^{*}(\lambda^{e})-\beta^{*}(s))[p_{0,0}+P_{0}^{*}(0,z)].
\end{array}\label{b2}
\end{equation}
So the second in \eqref{t1s1} has been proved. By applying the same procedure for the forth in \eqref{bnm} we obtain
\begin{equation}
\begin{array}{rl}
P_{1,3}(0,z)=&\frac{\beta^{*}(\lambda^{r})}{z}P_{0}(0,z),\\
P_{1,3}^{*}(s,z)=&\frac{\beta^{*}(\lambda^{r})-\beta^{*}(s)}{z(s-\lambda^{r})}P_{0}(0,z).
\end{array}\label{b3}
\end{equation}
Now repeat the same procedure for the fifth and sixth in \eqref{bnm} we obtain
\begin{displaymath}
\begin{array}{rl}
(s-\lambda_{+}^{e})P_{1,4}^{*}(s,z)=&P_{1,4}(0,z)-\lambda^{e}zP_{1,2}^{*}(s,z),\\
(s-\lambda_{+}^{e})P_{1,5}^{*}(s,z)=&P_{1,5}(0,z)-\lambda_{+}^{e}z(P_{1,5}^{*}(s,z)+P_{1,4}^{*}(s,z)).
\end{array}
\end{displaymath}
Summing the above equations and using \eqref{b2}, we obtain
\begin{equation}
\begin{array}{rl}
\sum_{k=4}^{5}P_{1,k}(0,z)=&\frac{\lambda^{-}\lambda^{e}z[\beta^{*}(\lambda^{e})-\beta^{*}(\lambda^{e}(1-z))]}{\lambda_{+}^{e}(1-z)-\lambda^{e}}[p_{0,0}+P_{0}^{*}(0,z)],\\
\sum_{k=4}^{5}P_{1,k}^{*}(s,z)=&\frac{\lambda^{-}\lambda^{e}z}{s-\lambda_{+}^{e}(1-z)}\left(\frac{\beta^{*}(\lambda^{e})-\beta^{*}(\lambda^{e}(1-z))}{\lambda_{+}^{e}(1-z)-\lambda^{e}}-\frac{\beta^{*}(\lambda^{e})-\beta^{*}(s)}{s-\lambda^{e}}\right)\\&\times[p_{0,0}+P_{0}^{*}(0,z)].
\end{array}\label{b4}
\end{equation}
Similar operations for the last two equations in \eqref{bnm} yield
\begin{equation}
\begin{array}{rl}
\sum_{k=6}^{7}P_{1,k}(0,z)=&\frac{\lambda^{r}[\beta^{*}(\lambda^{r})-\beta^{*}(\lambda^{r}(1-z))]}{\lambda_{+}^{r}(1-z)-\lambda^{r}}P_{0}(0,z),\\
\sum_{k=6}^{7}P_{1,k}^{*}(s,z)=&\frac{\lambda^{r}P_{0}(0,z)}{s-\lambda_{+}^{r}(1-z)}\left(\frac{\beta^{*}(\lambda^{r})-\beta^{*}(\lambda^{r}(1-z))}{\lambda_{+}^{r}(1-z)-\lambda^{r}}-\frac{\beta^{*}(\lambda^{r})-\beta^{*}(s)}{s-\lambda^{r}}\right).
\end{array}\label{b5}
\end{equation}
Now from \eqref{b1},
\begin{equation}
\sum_{k=2}^{7}P_{1}^{(k)}(0,z)=\lambda^{-}p_{0,0}+\frac{P_{0}(0,z)}{\alpha^{*}(\lambda^{-})}.\label{v1}
\end{equation}
Using \eqref{b2}-\eqref{b5}, and after lengthy algebraic computations, we obtain
\begin{equation}
\sum_{k=2}^{7}P_{1}^{(k)}(0,z)=\lambda^{-}A_{pr}(z)(p_{0,0}+P_{0}^{*}(0,z))+\frac{P_{0}(0,z)}{z}A_{re}(z).\label{v2}
\end{equation}
Equating \eqref{v1}, \eqref{v2} we obtain
\begin{equation}
P_{0}(0,z)=\frac{\lambda^{-}z\alpha^{*}(\lambda^{-})(p_{0,0}(A_{pr}(z)-1)+A_{re}(z)P_{0}^{*}(0,z))}{z-\alpha^{*}(\lambda^{-})A_{re}(z)}.\label{gb}
\end{equation}
Using \eqref{bv}, \eqref{gb}, we obtain
\begin{equation}
\begin{array}{c}
P_{0}^{*}(s,z)=\frac{\alpha^{*}(\lambda^{-})-\alpha^{*}(s)}{s-\lambda^{-}}\frac{P_{0}(0,z)}{\alpha^{*}(\lambda^{-})}
\end{array}\label{z1}
\end{equation}
Setting $s=0$ in \eqref{z1}, and using \eqref{gb} we obtain the first in \eqref{t1s1}. Setting $s=0$ in \eqref{b3}, and letting $K(z):=\frac{P_{0}(0,z)}{z}$, we obtain the third in \eqref{t1s1}. Similarly we can obtain the rest expressions in \eqref{t1s1}.

Having obtain the expressions in \eqref{t1s1}, and having in mind that $1=p_{0,0}+P_{0}^{*}(0,1)+\sum_{k=2}^{7}P_{1,k}^{*}(0,1)$ we derive after lengthy but straightforward computations the probability of an empty system $p_{0,0}$. 
\section{Proof of Corollary \ref{cor1}}\label{ap4}
Let $A_{k}^{(j)}(1)=\frac{d^{j}}{dz^{j}}A_{k}(z)|_{z=1}$, $k=e,r$, $j=1,2$. Then, for $k=e,r$ we have
\begin{displaymath}
\begin{array}{rl}
A_{k}^{(1)}(1)=&\frac{(\lambda^{k}-\lambda_{+}^{k})(1-\beta^{*}(\lambda^{k}))+\lambda^{k}\lambda_{+}^{k}\bar{b}}{\lambda^{k}},\\
A_{k}^{(2)}(1)=&\lambda_{+}^{k}(2\bar{b}+\lambda_{+}^{k}\bar{b}^{(2)})-2\frac{\lambda_{+}^{k}}{\lambda^{k}}A_{k}^{(1)}(1).
\end{array}
\end{displaymath}
Let also,
\begin{displaymath}
\begin{array}{rl}
G:=&\frac{\alpha^{*}(\lambda^{-})[(2A_{e}^{(1)}(1)+A_{e}^{(2)}(1))(1-A_{r}^{(1)}(1))+A_{e}^{(1)}(1)A_{r}^{(2)}(1)]}{2[A_{e}^{(1)}(1)(\alpha^{*}(\lambda^{-})-1)+\alpha^{*}(\lambda^{-})(1-A_{r}^{(1)}(1))]^{2}},\vspace{2mm}\\
F:=&\frac{[p_{0,0}(1-\alpha^{*}(\lambda^{-}))G+A_{e}^{(1)}(1)P(C=0)]-P_{0}^{*}(0,1)(1-\alpha^{*}(\lambda^{-}A_{r}^{(1)}(1)))}{(1-\alpha^{*}(\lambda^{-}))^{2}}.
\end{array}
\end{displaymath}
Then, using the results in Theorem \ref{basic}, and differentiating with respect to $z$, at $z=1$, we obtain after heavy computations
\begin{equation}
\begin{array}{rl}
\frac{d}{dz}P_{0}^{*}(0,z)|_{z=1}=&p_{0,0}(1-\alpha^{*}(\lambda^{-}))G,\\
\frac{d}{dz}P_{1,2}^{*}(0,z)|_{z=1}=&p_{0,0}(1-\alpha^{*}(\lambda^{-}))\lambda^{-}\frac{1-\beta^{*}(\lambda^{e})}{\lambda^{e}}G,\\
\frac{d}{dz}P_{1,3}^{*}(0,z)|_{z=1}=&\lambda^{-}\alpha^{*}(\lambda^{-})\frac{1-\beta^{*}(\lambda^{r})}{\lambda^{r}}F,\\
\frac{d}{dz}\sum_{k=4}^{5}P_{1,k}^{*}(0,z)|_{z=1}=&\lambda^{-}(\bar{b}-\frac{1-\beta^{*}(\lambda^{e})}{\lambda^{e}})[\frac{d}{dz}P_{0}^{*}(0,z)|_{z=1}+\frac{\lambda^{e}-\lambda_{+}^{e}}{\lambda^{e}}P(C=0)]\\&+\frac{\lambda^{-}\lambda_{+}^{e}\bar{b}^{(2)}}{2}P(C=0),\\
\frac{d}{dz}\sum_{k=6}^{7}P_{1,k}^{*}(0,z)|_{z=1}=&(\bar{b}-\frac{1-\beta^{*}(\lambda^{r})}{\lambda^{r}})(\frac{\lambda^{r}-\lambda_{+}^{r}}{\lambda^{e}}P_{0}(0,1)+F)+\frac{\lambda_{+}^{r}\bar{b}^{(2)}}{2}P_{0}(0,1).
\end{array}\label{opa}
\end{equation}
Sum the terms in \eqref{opa} to obtain $E(X)$ in \eqref{metr}. Let $TH_{S}$ is the throughput generated by the system. Then,
\begin{displaymath}
\begin{array}{rl}
TH_{S}=&\lambda^{-}P_{0}^{*}(0,1)+\lambda^{e}P_{1,2}^{*}(0,1)+\lambda_{+}^{e}\sum_{k=4}^{5}P_{1,k}^{*}(0,1)\\
&+\lambda^{r}P_{1,3}^{*}(0,1)+\lambda_{+}^{r}\sum_{k=6}^{7}P_{1,k}^{*}(0,1).
\end{array}
\end{displaymath}
After tedious computations we can have the expression given in \eqref{metr}. 
\bibliographystyle{unsrt}

\bibliography{arxiv}

\begin{thebibliography}{10}

\bibitem{call1}
Mor Armony and Constantinos Maglaras.
\newblock On customer contact centers with a call-back option: Customer
  decisions, routing rules, and system design.
\newblock {\em Operations Research}, 52(2):271--292, 2004.

\bibitem{call2}
Mor Armony and Constantinos Maglaras.
\newblock Contact centers with a call-back option and real-time delay
  information.
\newblock {\em Operations Research}, 52(4):527--545, 2004.

\bibitem{dud1}
Alexander~N Dudin, Achyutha Krishnamoorthy, VC~Joshua, and Gennady~V Tsarenkov.
\newblock Analysis of the {BMAP/G/1} retrial system with search of customers
  from the orbit.
\newblock {\em European Journal of Operational Research}, 157(1):169--179,
  2004.

\bibitem{phung}
Tuan Phung-Duc.
\newblock {\em Retrial Queueing Models: A Survey on Theory and Applications},
  pages 1--31.
\newblock Stochastic Operations Research in Business and Industry. World
  Scientific, 05 2017.

\bibitem{gencer}
Busra Gencer, Zeynep Karaesmen, Evrim Gunes, and Ozge Pala.
\newblock The impact of queue features on customers' queue joining and reneging
  behavior: A laboratory experiment.
\newblock IFORS 2014, 07 2014.

\bibitem{falin}
Gennadi Falin and James~GC Templeton.
\newblock {\em Retrial queues}, volume~75.
\newblock CRC Press, 1997.

\bibitem{arta}
Jes{\`u}s~R.. Artalejo and Antonio G{\'o}mez-Corral.
\newblock {\em Retrial Queueing Systems: A Computational Approach}.
\newblock Springer-Verlag Berlin Heidelberg, 2008.

\bibitem{dim1}
Christos Langaris and Ioannis Dimitriou.
\newblock A queueing system with n-phases of service and (n-1)-types of retrial
  customers.
\newblock {\em European Journal of Operational Research}, 205(3):638--649,
  2010.

\bibitem{dim2}
Ioannis Dimitriou.
\newblock A two-class queueing system with constant retrial policy and general
  class dependent service times.
\newblock {\em European Journal of Operational Research}, 270(3):1063--1073,
  2018.

\bibitem{fayo}
G~Fayolle.
\newblock A simple telephone exchange with delayed feedbacks.
\newblock In {\em Proc. of the international seminar on Teletraffic analysis
  and computer performance evaluation}, pages 245--253, 1986.

\bibitem{choi}
B.D. Choi, K.K. Park, and C.E.M. Pearce.
\newblock An {M/M/1} retrial queue with control policy and general retrial
  times.
\newblock {\em Queueing Systems}, 14(3-4):275 – 292, 1993.
\newblock Cited by: 51.

\bibitem{gomez}
Antonio G{\'o}mez-Corral.
\newblock Stochastic analysis of a single server retrial queue with general
  retrial times.
\newblock {\em Naval Research Logistics (NRL)}, 46(5):561--581, 1999.

\bibitem{baron}
Opher Baron, Antonis Economou, and Athanasia Manou.
\newblock The state-dependent {M/G/1} queue with orbit.
\newblock {\em Queueing Systems}, 90(1):89--123, 2018.

\bibitem{legros2018}
Benjamin Legros.
\newblock {M/G/1} queue with event-dependent arrival rates.
\newblock {\em Queueing Systems}, 89(3):269--301, 2018.

\bibitem{legros2018q}
Benjamin Legros and Ali~Devin Sezer.
\newblock Stationary analysis of a single queue with remaining service
  time-dependent arrivals.
\newblock {\em Queueing Systems}, 88(1):139--165, 2018.

\bibitem{legros2021}
Benjamin Legros.
\newblock Dimensioning a queue with state-dependent arrival rates.
\newblock {\em Computers \& Operations Research}, 128:105179, 2021.

\bibitem{legros2022}
Benjamin Legros.
\newblock The principal-agent problem for service rate event-dependency.
\newblock {\em European Journal of Operational Research}, 297(3):949--963,
  2022.

\bibitem{beker}
René Bekker, S.~Borst, O.J. Boxma, and Offer Kella.
\newblock Queues with workload-dependent arrival and service rates: In honor of
  vladimir kalashnikov (guest editors: Evsey morozov and richard serfozo).
\newblock {\em Queueing Systems}, 46, 03 2004.

\bibitem{box}
Onno Boxma, Haya Kaspi, Offer Kella, and David Perry.
\newblock On/off storage systems with state-dependent input, output, and
  switching rates.
\newblock {\em Probability in the Engineering and Informational Sciences},
  19(1):1–14, 2005.

\bibitem{kern}
Yoav Kerner.
\newblock The conditional distribution of the residual service time in the {M n
  /G/1} queue.
\newblock {\em Stochastic Models}, 24(3):364--375, 2008.

\bibitem{box2}
Onno~J Boxma and Maria Vlasiou.
\newblock On queues with service and interarrival times depending on waiting
  times.
\newblock {\em Queueing Systems}, 56(3):121--132, 2007.

\bibitem{dau}
Bernardo D’Auria, Ivo~J.B.F. Adan, René Bekker, and Vidyadhar Kulkarni.
\newblock An {M/M/c} queue with queueing-time dependent service rates.
\newblock {\em European Journal of Operational Research}, 299(2):566--579,
  2022.

\bibitem{artatop}
JR~Artalejo and JI~Falin.
\newblock Stochastic decomposition for retrial queues.
\newblock {\em Top}, 2(2):329--342, 1994.

\end{thebibliography}

\end{document}